\definecolor{verylight}{gray}{0.97}
\definecolor{light}{gray}{0.9}
\definecolor{medium}{gray}{0.85}
\definecolor{dark}{gray}{0.6}
\def\NZQ{\Bbb}               
\def\ZZ{{\NZQ Z}}
\def\FF{{\NZQ F}}
\def\GG{{\NZQ G}}
\def\DD{{\NZQ D}}
\def\frk{\frak}               
\def\Phi{{\frk n}}
\def\Phi{{\frk N}}
\def\MS{{\mathcal S}}
\def\MC{{\mathcal C}}
\def\ab{{\bold a}}
\def\bb{{\bold b}}
\def\eb{{\bold e}}
\def\opn#1#2{\def#1{\operatorname{#2}}} 
\opn\chara{char} \opn\length{\ell} \opn\pd{pd} \opn\rk{rk}
\opn\projdim{proj\,dim} \opn\injdim{inj\,dim} \opn\rank{rank}
\opn\depth{depth} \opn\grade{grade} \opn\height{height}
\opn\embdim{emb\,dim} \opn\codim{codim}
\opn\Tr{Tr} \opn\bigrank{big\,rank}
\opn\superheight{superheight}\opn\lcm{lcm}
\opn\trdeg{tr\,deg}
\opn\reg{reg} \opn\lreg{lreg} \opn\ini{in} \opn\lpd{lpd}
\opn\size{size}\opn\bigsize{bigsize}
\opn\cosize{cosize}\opn\bigcosize{bigcosize}
\opn\sdepth{sdepth}\opn\sreg{sreg}
\opn\link{link}\opn\fdepth{fdepth}\opn\lin{lin} \opn\ini{in}
\opn\div{div} \opn\Div{Div} \opn\cl{cl} \opn\Cl{Cl}
\opn\Spec{Spec} \opn\Supp{Supp} \opn\supp{supp} \opn\Sing{Sing}
\opn\Ass{Ass} \opn\Min{Min}\opn\Mon{Mon} \opn\dstab{dstab} \opn\astab{astab}
\opn\Syz{Syz}
\opn\Ann{Ann} \opn\Rad{Rad} \opn\Soc{Soc}
\opn\Im{Im} \opn\Ker{Ker} \opn\Coker{Coker} \opn\Am{Am}
\opn\Hom{Hom} \opn\Tor{Tor} \opn\Ext{Ext} \opn\End{End}
\opn\Aut{Aut} \opn\id{id}
\opn\nat{nat}
\opn\pff{pf}
\opn\Pf{Pf} \opn\GL{GL} \opn\SL{SL} \opn\mod{mod} \opn\ord{ord}
\opn\Gin{Gin} \opn\Hilb{Hilb}\opn\sort{sort}
\opn\initial{init}
\opn\ende{end}
\opn\height{height}
\opn\type{type}
\opn\aff{aff} \opn\con{conv} \opn\relint{relint} \opn\st{st}
\opn\lk{lk} \opn\cn{cn} \opn\core{core} \opn\vol{vol}
\opn\link{link} \opn\star{star}\opn\lex{lex}\opn\sign{sign}
\opn\gr{gr}
\def\pot#1#2{#1[\kern-0.28ex[#2]\kern-0.28ex]}
\opn\dirlim{\underrightarrow{\lim}}
\opn\inivlim{\underleftarrow{\lim}}
\let\union=\cup
\let\dirsum=\oplus
\let\tensor=\otimes
\let\iso=\cong
\let\Dirsum=\bigoplus
\let\to=\rightarrow
\let\To=\longrightarrow
\def\Implies{\ifmmode\Longrightarrow \else
        \unskip${}\Longrightarrow{}$\ignorespaces\fi}
\def\implies{\ifmmode\Rightarrow \else
        \unskip${}\Rightarrow{}$\ignorespaces\fi}
\def\iff{\ifmmode\Longleftrightarrow \else
        \unskip${}\Longleftrightarrow{}$\ignorespaces\fi}
\newtheorem{Theorem}{Theorem}[section]
 \newtheorem{Lemma}[Theorem]{Lemma}
 \newtheorem{Corollary}[Theorem]{Corollary}
\let\epsilon\varepsilon
\let\kappa=\varkappa
\def\qed{\ifhmode\textqed\fi
      \ifmmode\ifinner\quad\qedsymbol\else\dispqed\fi\fi}
\def\textqed{\unskip\nobreak\penalty50
       \hskip2em\hbox{}\nobreak\hfil\qedsymbol
       \parfillskip=0pt \finalhyphendemerits=0}
\def\dispqed{\rlap{\qquad\qedsymbol}}
\opn\dis{dis}
\def\pnt{{\raise0.5mm\hbox{\large\bf.}}}
\opn\Lex{Lex}
\begin{document}
 \title{The linear strand of determinantal facet ideals}

 \author {J\"urgen Herzog, Dariush Kiani  and Sara Saeedi Madani}

\address{J\"urgen Herzog, Fachbereich Mathematik, Universit\"at Duisburg-Essen, Campus Essen, 45117
Essen, Germany} \email{juergen.herzog@uni-essen.de}

\address{Dariush Kiani, Department of Pure Mathematics,
 Faculty of Mathematics and Computer Science,
 Amirkabir University of Technology (Tehran Polytechnic),
424, Hafez Ave., Tehran 15914, Iran, and School of Mathematics, Institute for Research in Fundamental Sciences (IPM),
P.O. Box 19395-5746, Tehran, Iran.} \email{dkiani@aut.ac.ir, dkiani7@gmail.com}

\address{Sara Saeedi Madani, Universit\"at Osnabr\"uck, Institut f\"ur Mathematik, 49069 Osnabr\"uck, Germany}
\email{sara.saeedimadani@uni-osnabrueck.de, sarasaeedim@gmail.com}

\thanks{}
\thanks{}

 \begin{abstract}
Let $X$ be an $(m\times n)$-matrix of indeterminates, and let $J$ be the ideal generated by a set $\MS$ of maximal minors of $X$. We construct the linear strand of the resolution of $J$. This linear strand  is determined by the clique complex of the $m$-clutter corresponding to  the set $\MS$. As a consequence  one obtains explicit formulas for the graded Betti numbers $\beta_{i,i+m}(J)$ for all $i\geq 0$. We also determine all sets $\MS$ for which $J$ has a linear resolution.
 \end{abstract}

\thanks{The third author was supported by the German Research Council DFG-GRK~1916.}

\subjclass[2010]{Primary 16E05, 13C40; Secondary 05E40, 13C05.}
\keywords{Linear strands, generalized Eagon-Northcott complex, determinantal facet ideals, binomial edge ideals}

\maketitle

\section*{Introduction}
In this paper we consider ideals generated by an arbitrary  set of maximal minors of an $(m\times n)$-matrix of indeterminates. Such ideals have first been considered when $m=2$, in which case the set of minors (which is a set of binomials) is in bijection with the edges of a graph $G$,  and therefore is called the  binomial edge ideal of $G$. This class of ideals has first been considered in \cite{HHHKR} and \cite{O}. In \cite{HHHKR} the relevance of such ideals for algebraic statistics has been stressed. In the sequel binomial edge ideals have been studied in numerous papers with an attempt to better understand their algebraic and homological properties, see for example \cite{EHH}, \cite{EHH1}, \cite{KS1}, \cite{SZ} and \cite{ZZ}. In some special cases, the resolution of such ideals has been determined and upper bounds for their regularity have been given, see \cite{D}, \cite{EZ}, \cite{KS}, \cite{KS2}, \cite{MM}, \cite{SK} and \cite{SK1}.

When $m>2$, these ideals are called determinantal facet ideals. Here its generators are in bijection to the facets of a pure simplicial complex of dimension $m-1$. These ideals were introduced and first studied in \cite{EHHM}. About the resolution of determinantal facet ideals is known even less than for binomial edge ideals. Apart from a very special case considered in \cite{Mo}, the resolution of a determinantal facet ideal is only known when the underlying simplicial complex is a simplex, in which case the Eagon-Northcott complex provides a resolution.

One of the motivations for writing this paper was a conjecture made in \cite{KS} by the second and third author of this paper.
Given a finite simple graph $G$. Let $J_G$ be its binomial edge ideal. Then the graded Betti numbers $\beta_{i,i+2}(J_G)$ give the ranks of the corresponding free modules of the linear strand of $J_G$. Now the conjecture made in \cite{KS} says that $\beta_{i,i+2}(J_G)=(i+1)f_{i+1}(\Delta(G))$ for all $i\geq 0$. Here $\Delta(G)$ is the  clique complex of $G$ and $f_{i+1}(\Delta(G))$ is the number of faces of $\Delta(G)$ of dimension $i+1$. In this paper we will not only prove this conjecture but also prove an analogue statement for determinantal facet ideals, see Corollary~\ref{linearbetti}. In fact, the linear strand of any determinantal facet ideal, which is a subcomplex of its minimal graded free resolution, is explicitly described in Theorem~\ref{hope}.

In general, the linear strand of a graded minimal free resolution is a linear complex, that is, a complex whose matrices describing the differentials of the complex have linear forms as entries. Now given any finite linear complex, one may ask when such a complex is the linear strand of the graded minimal free resolution of a suitable graded module. This question, which is of interest by itself,  is answered in the first section of the paper, see Theorem~\ref{when}. A different description of the linear strand is given by Eisenbud in \cite[Theorem~7.4]{Ei1}. Our characterization of linear strands is then used to prove that the complex constructed in Section~\ref{Generalized Eagon-Northcott} is indeed the linear strand of the corresponding determinantal facet ideal.

In order to describe this complex,  let $\Delta$ be a simplicial complex on the vertex set $[n]$ and let $\varphi$ be a linear map between free modules of rank $m$ and $n$.  We define a subcomplex of the Eagon-Northcott complex $\MC(\varphi)$ associated with  $\varphi$ which we denote by $\MC(\Delta;\varphi)$. The complex  $\MC(\Delta;\varphi)$  is obtained from $\MC(\varphi)$ by restriction to basis elements determined by the faces of $\Delta$. In order to see that this is indeed a well defined subcomplex one has to describe the  differentials of $\MC(\varphi)$ explicitly in terms of the given natural bases.   For the convenience of the reader we included this description of the Eagon-Northcott complex. We call the subcomplex $\MC(\Delta;\varphi)$ the generalized Eagon-Northcott complex (determined by $\Delta$ and $\varphi$). When $m=1$, $\mathcal{C}(\Delta;\varphi)$ is a very special cellular complex. Cellular complexes were considered by Bayer, Peeva and Sturmfels (\cite{BS}, \cite{BPS}). Fl{\o}ystad in \cite{Fl} considered this cellular complex and called their homology to be the enriched homology module of $\Delta$ with respect to $K$.

In Theorem~\ref{missing} it is shown that $\MC(\Delta;\varphi)$ is the linear strand of a module with initial degree $m$ if and only if $\Delta$ has no minimal nonfaces of cardinality $\geq m+2$. Here and in all the following statements $\varphi$ is defined by an $(m\times n)$-matrix $X$ of indeterminates. We use this result in Section~\ref{determinantal} to obtain one of the main results of this paper: let $C$ be an $m$-uniform clutter, that is a collection of subsets of $[n]$ of cardinality $m$. The elements of $C$ are called the circuits of $C$. The clique complex of $C$, which we denote by $\Delta(C)$, has as faces all subsets $\sigma$  of $[n]$ with the property that each $m$-subset of $\sigma$  belongs to $C$. The clique complex $\Delta(C)$ has so minimal nonfaces of cardinality $\geq m+2$, and hence Theorem~\ref{missing} can be applied, and it is shown in Theorem~\ref{hope} that $\MC(\Delta(C);\varphi)$ is the linear strand of the resolution of $J_C$ where $J_C$ is generated by all maximal minors of $X$ whose columns are determined by the circuits of $C$. From this fact one deduces immediately the formula for the Betti numbers of the linear strand of $J_C$, namely
\[
\beta_{i,i+m}(J_C)={m+i-1\choose m-1} f_{m+i-1}(\Delta(C)).
\]

In Section~\ref{linear resolution} the determinantal facet ideals with linear resolution are characterized, and it is shown in Theorem~\ref{linear res} that $J_C$ has linear resolution if and only if $J_C$ is linearly presented, and that this is the case if and only if the clutter $C$ is complete, which means that $C$ consists of all $m$-subsets of a given set $V\subset [n]$.

\section{The linear strand of a graded free resolution}\label{linear strand}

Let $K$ be a field and $S=K[x_1,\ldots,x_n]$ be the polynomial ring over $K$ in the indeterminates $x_1,\ldots,x_n$. We view $S$ as a standard graded $K$-algebra by assigning to each $x_i$ the degree $1$. Let $M$ be a finitely generated graded $S$-module. Let $d$ be the initial degree of $M$, namely, the smallest integer $i$ such that $M_i\neq 0$.

Let $(\FF,\partial)$ be the minimal graded free resolution of $M$. Then $F_i=\Dirsum_jS(-j)^{\beta_{i,j}}$ where $\beta_{ij}=\dim_K\Tor_i(K,M)_j$ is the $ij$-th graded Betti number of $M$. Note that $\beta_{ij}=0$ for all pairs $(i,j)$ with  $j<i+d$.

Let $F_i^{\lin}$ be the direct summand $S(-i-d)^{\beta_{i,i+d}}$ of $F_i$. It is obvious that  $\partial (F_i^{\lin})\subset F_{i-1}^{\lin}$ for all $i>0$. Thus
\[
\FF^{\lin}\:\; \cdots\To F_2^{\lin} \To F_1^{\lin}\To F_0^{\lin}\To 0
\]
is a subcomplex of $\FF$, called the {\em linear strand} of the resolution of $M$. Throughout the paper, by the linear strand of a module $M$ we mean the linear strand of the resolution of $M$.

\medskip
A graded complex $\GG:   \cdots\to G_2 \to G_1\to G_0\to 0$ of finitely generated graded free $S$-modules is called a {\em linear complex} ({\em with  initial degree $d$}) if for all $i$, $G_i=S(-i-d)^{b_i}$ for suitable integers $b_i$.  Obviously, $\FF^{\lin}$ is a linear complex.

The question arises which linear complexes are the linear strand of a finitely generated graded module. The following result answers this question.

\begin{Theorem}
\label{when}
Let $\GG$ be a finite linear complex with initial degree $d$. The following conditions are equivalent:
\begin{enumerate}
\item[{\em (a)}] $\GG$ is the linear strand of a finitely generated graded $S$-module with initial degree~$d$.
\item[{\em (b)}] $H_i(\GG)_{i+d+j}=0$ for all $i>0$ and for $j=0,1$.
\end{enumerate}
\end{Theorem}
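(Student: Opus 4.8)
The plan is to prove the two implications separately; (a)$\Rightarrow$(b) is a short degree count, while (b)$\Rightarrow$(a) carries the content. For (a)$\Rightarrow$(b), suppose $\GG=\FF^{\lin}$ for the minimal free resolution $\FF$ of a module with initial degree $d$. Since $\FF^{\lin}$ is a subcomplex, I would apply the long exact homology sequence of $0\To\FF^{\lin}\To\FF\To\FF/\FF^{\lin}\To 0$. As $\FF$ is acyclic in positive homological degrees, $H_i(\FF^{\lin})$ is a quotient of $H_{i+1}(\FF/\FF^{\lin})$ for every $i>0$. But $(\FF/\FF^{\lin})_{i+1}=\Dirsum_{j\geq i+d+2}S(-j)^{\beta_{i+1,j}}$ is generated in degrees $\geq i+d+2$, hence vanishes in internal degrees $i+d$ and $i+d+1$; therefore $H_{i+1}(\FF/\FF^{\lin})_{i+d+j}=0$ for $j=0,1$, and (b) follows.

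For (b)$\Rightarrow$(a), set $M=H_0(\GG)=\Coker\partial_1$. Since $G_0=S(-d)^{b_0}$ and $\partial_1(G_1)$ sits in degrees $\geq d+1$, the module $M$ has initial degree $d$ with $F_0=G_0$. Let $\FF$ be the minimal free resolution of $M$; the comparison theorem provides a degree-preserving chain map $\phi\colon\GG\To\FF$ lifting $\id_M$ on $H_0$, built inductively from projectivity of the $G_i$ and exactness of $\FF$. It then suffices to show that $\beta_{i,i+d}(M)=b_i$ and that $\phi_i\tensor K$ is injective in internal degree $i+d$, for then graded Nakayama makes each $\phi_i$ a split injection onto $F_i^{\lin}$, and the chain-map property yields $\GG\cong\FF^{\lin}$.

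The crux, and where hypothesis (b) is used, is the computation of the numbers $\beta_{i,i+d}(M)$. I would run the hyper-Tor spectral sequence of the bounded free complex $\GG$, namely $E^2_{p,q}=\Tor_p(H_q(\GG),K)\Rightarrow H_{p+q}(\GG\tensor_S K)$. Because the differentials of $\GG$ are linear, $\GG\tensor K$ has zero differential, so the abutment $H_n(\GG\tensor K)=G_n\tensor K$ is concentrated in internal degree $n+d$ with dimension $b_n$. Condition (b) says exactly that $H_q(\GG)$ has initial degree $\geq q+d+2$ for $q>0$, whence $\Tor_p(H_q(\GG),K)_t=0$ whenever $q>0$ and $t<p+q+d+2$. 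Fixing internal degree $t=n+d$, this kills every entry with $q>0$ on the diagonals $p+q=n$ and $p+q=n-1$; since $(p,q)=(n,0)$ has no incoming differentials and its outgoing $d_r$ land on $p+q=n-1,\ q>0$, it survives untouched. Hence $\Tor_n(M,K)_{n+d}$ is the only nonzero $E^\infty$-term in internal degree $n+d$, giving $\beta_{n,n+d}(M)=b_n$.

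Finally, naturality of this spectral sequence under $\phi$ identifies the edge homomorphism with $\phi_*=\phi\tensor K$; on the row $q=0$ it is the identity of $\Tor_\bullet(M,K)$, so $\phi_n\tensor K$ is an isomorphism onto $\Tor_n(M,K)_{n+d}=F_n^{\lin}\tensor K$, and in particular injective since $G_n\tensor K$ lives only in degree $n+d$. This completes the identification $\GG\cong\FF^{\lin}$. The main obstacle is the spectral-sequence bookkeeping: one must see that precisely the two vanishings $j=0,1$ (equivalently, initial degree $\geq q+d+2$ of the higher homology) are what push both the off-diagonal terms and the $d_r$-targets out of the relevant internal degree, thereby ruling out simultaneously any spurious linear generators and any spurious linear syzygies.
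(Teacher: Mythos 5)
Your argument is correct, but while your (a)\implies(b) coincides with the paper's (the long exact homology sequence of $0\To \FF^{\lin}\To \FF\To \FF/\FF^{\lin}\To 0$ together with the observation that $(\FF/\FF^{\lin})_{i+1}$ lives only in internal degrees $\geq i+d+2$), your (b)\implies(a) takes a genuinely different route. The paper argues by an elementary induction on homological degree: assuming the truncations of $\GG$ and $\FF^{\lin}$ are isomorphic up to step $i$, it passes to the submodule $W$ of $Z=\Ker(G_i\to G_{i-1})$ generated in degree $i+d+1$ and uses the two vanishings separately --- $H_i(\GG)_{i+d+1}=0$ makes $G_{i+1}\to W$ surjective and $H_{i+1}(\GG)_{i+d+1}=0$ makes it minimal --- so that $G_{i+1}\to W$ and $F_{i+1}^{\lin}\to W'$ are minimal free presentations of isomorphic modules, which extends the isomorphism one step; no spectral sequences or Betti-number computations appear. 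You instead fix $M=H_0(\GG)$, compute $\beta_{i,i+d}(M)=b_i$ from the hyperhomology spectral sequence $E^2_{p,q}=\Tor_p(H_q(\GG),K)$, and transport this along a comparison chain map $\phi\:\GG\To\FF$ via naturality of edge maps and graded Nakayama. Your bookkeeping is exactly right: condition (b) is equivalent to $H_q(\GG)$ having initial degree $\geq q+d+2$ for $q>0$ (since $H_q(\GG)$ is a subquotient of $G_q$), and this kills, in internal degree $n+d$, all terms with $q>0$ on both diagonals $p+q=n$ and $p+q=n-1$, which is precisely what protects $E^2_{n,0}$ from outgoing differentials; the points you leave implicit (that (b) forces $b_0\neq 0$ unless $\GG=0$, so $M$ really has initial degree $d$, and that $\phi_n(G_n)\subseteq F_n^{\lin}$ for degree reasons) are automatic. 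As for what each approach buys: the paper's induction is self-contained and proves the statement for an \emph{arbitrary} module $M$ whose map $F_1^{\lin}\to F_0^{\lin}$ matches $G_1\to G_0$, which immediately yields Corollary~\ref{who} --- the form actually invoked in the proof of Theorem~\ref{hope}; your argument as written is tied to $M=H_0(\GG)$, since the row $q=0$ of the map of spectral sequences is $\Tor_\bullet(\id_M,K)$, so recovering Corollary~\ref{who} would need an extra remark (e.g.\ that the linear strand of such a $W$ depends only on its degree-$d$-generated part). In exchange, your route delivers the linear Betti numbers $\beta_{i,i+d}(M)=b_i$ in one stroke and is close in spirit to Eisenbud's alternative description of the linear strand, which the paper itself cites as \cite[Theorem~7.4]{Ei1}.
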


\begin{proof} We may assume that $d=0$.

(a)\implies (b): Let $\GG=\FF^{\lin}$ where $\FF$ is the minimal graded free resolution of a finitely generated graded $S$-module $M$. We denote by $\DD$ the quotient complex $\FF/\FF^{\lin}$. From the exact sequence of complexes
\[
0\To \FF^{\lin}\To \FF \To \DD\To 0
\]
we deduce that
\begin{eqnarray}
\label{iso}
H_i(\FF^{\lin})_{i+j}\iso H_{i+1}(\DD)_{i+1+(j-1)}
\end{eqnarray}
for all $i>0$ and all $j$.

Since $(D_i)_j=0$ for all $i\geq 0$ and all $j\leq i$, we see that $H_{i+1}(\DD)_{i+1+(j-1)}=0$ for $j=0$ and $j=1$. Thus (\ref{iso}) yields the desired conclusion.

(b)\implies(a): Let $M$ be a graded module whose minimal graded free resolution $\FF$ satisfies the condition that $F_1^{\lin}\to F_0^{\lin}$ is isomorphic to $G_1\to G_0$. (For example we could choose $M=H_0(\GG)$). We prove by induction on $i$ that the truncated complex
\[
G_i\to G_{i-1}\to\cdots \to G_1\to G_0\to 0
\]
is isomorphic to the truncated complex
\[
F_i^{\lin}\to F_{i-1}^{\lin}\to \cdots \to F_1^{\lin}\to F_0^{\lin}\to 0.
\]
By the definition of $M$ this is the case for $i=1$. Now let $i>1$. By induction hypothesis we have
\[
\Ker(G_i\to G_{i-1})\iso\Ker(F_i^{\lin}\to F_{i-1}^{\lin}).
\]
Let $Z=\Ker(G_i\to G_{i-1})$ and let $W$  be the submodule of $Z$ which is generated by all elements of $Z$ of degree $i+1$. Similarly we let  $Z'=\Ker(F_i^{\lin}\to F_{i-1}^{\lin})$ and $W'$ be the submodule of $Z'$ which is generated by all the elements of $Z'$ of degree $i+1$. Then the isomorphism  $Z\iso Z'$ induces an isomorphism $W\iso W'$.

Since by assumption
$H_i(\GG)_{i+1}=H_{i+1}(\GG)_{i+1}=0$, it follows that $G_{i+1}\to W$ is a minimal free presentation of $W$. By the same reason,  $F_{i+1}^{\lin}\to W'$ is a minimal free presentation of $W'$. This shows that the truncations of $\GG$ and $\FF^{\lin}$ are isomorphic up to homological degree $i+1$.
\end{proof}

We would like to remark that the linear strand $\FF^{\lin}$ of a module $M$ with initial degree $d$ is only determined by the submodule $N$ of $M$ which is generated by the  elements of degree $d$ of $M$. Moreover, $N=\Coker (F_1^{\lin}\to F_0^{\lin})$ if and only if $N$ is linearly presented.

Consider for example the ideal $I=(x^2,xy,y^3, z^2)\subset S=K[x,y,z]$ with minimal graded free resolution $\FF$.   Then $I$ and $J=(x^2,xy,z^2)$ have the same linear  strand. We have that $J\neq \Coker (F_1^{\lin}\to F_0^{\lin})$, since $J$ is not linearly presented.  Indeed, $\Coker (F_1^{\lin}\to F_0^{\lin})$ is isomorphic to $(x^2,xy)S\dirsum S(-2)$.

\begin{Corollary}
\label{who}
Let $\GG$ be a finite linear complex with initial degree $d$ satisfying the conditions {\em (b)} of Theorem~\ref{when}. Furthermore, let $W$ be a finitely generated graded $S$-module with minimal graded free resolution $\FF$ satisfying the property that $G_1\to G_0$ is isomorphic  to  $ F_1^{\lin}\to F_0^{\lin}$. Then $\GG$ is isomorphic to the linear strand of $W$.
\end{Corollary}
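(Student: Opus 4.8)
The plan is to recognize that Corollary~\ref{who} is nothing but the implication (b)\implies(a) of Theorem~\ref{when}, now asserted for an \emph{arbitrary} module $W$ with the prescribed bottom two terms, rather than for the particular choice $M=H_0(\GG)$ made in that proof. Inspecting the proof of (b)\implies(a), the module $M$ entered only through the base case of the induction, namely through the requirement that $F_1^{\lin}\to F_0^{\lin}$ be isomorphic to $G_1\to G_0$; every subsequent step used only the complex $\GG$ together with condition (b). Since $W$ satisfies exactly this base-case hypothesis by assumption, I would simply rerun the same induction with $W$ in the role of $M$.

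In detail, let $\FF$ be the minimal graded free resolution of $W$, and prove by induction on $i$ that the truncation $G_i\To\cdots\To G_0\To 0$ is isomorphic to the truncation $F_i^{\lin}\To\cdots\To F_0^{\lin}\To 0$. The case $i=1$ is precisely the hypothesis that $G_1\to G_0$ is isomorphic to $F_1^{\lin}\to F_0^{\lin}$. For the inductive step, set $Z=\Ker(G_i\to G_{i-1})$ and $Z'=\Ker(F_i^{\lin}\to F_{i-1}^{\lin})$; the inductive hypothesis yields an isomorphism $Z\iso Z'$. Letting $U\subset Z$ and $U'\subset Z'$ be the submodules generated by the elements of degree $i+1$, this isomorphism restricts to an isomorphism $U\iso U'$ (I use the prime notation to avoid a clash with the module $W$ of the statement).

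The key point, carried out exactly as in the theorem, is the vanishing supplied by condition (b): it gives $H_i(\GG)_{i+1}=H_{i+1}(\GG)_{i+1}=0$, so that $G_{i+1}\to U$ is a minimal graded free presentation of $U$. The linear strand $\FF^{\lin}$ satisfies (b) as well, by the already established implication (a)\implies(b) of Theorem~\ref{when} applied to $W$, so likewise $F_{i+1}^{\lin}\to U'$ is a minimal free presentation of $U'$. By uniqueness of minimal free presentations the isomorphism $U\iso U'$ lifts to a compatible isomorphism $G_{i+1}\iso F_{i+1}^{\lin}$, extending the agreement of truncations to homological degree $i+1$; since $\GG$ is finite, the induction terminates and gives $\GG\iso\FF^{\lin}$, which is the assertion. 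I do not expect any genuine obstacle: the entire content is the observation that the induction in Theorem~\ref{when} depends on the module only through its two bottom free modules, and the only bookkeeping is to confirm that the isomorphism of kernels restricts to the degree-$(i+1)$ generated submodules and that (b) furnishes the vanishing needed to identify both $G_{i+1}\to U$ and $F_{i+1}^{\lin}\to U'$ as minimal presentations.
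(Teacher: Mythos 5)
Your proposal is correct and takes essentially the same route as the paper: the paper's proof of Corollary~\ref{who} consists precisely of the observation that the induction in the proof of (b)\implies(a) of Theorem~\ref{when} uses the module only through the base-case hypothesis $G_1\to G_0\iso F_1^{\lin}\to F_0^{\lin}$, so it applies verbatim with $W$ in place of $M=H_0(\GG)$. Your rerun of that induction---including invoking (a)\implies(b) for $W$ to get the vanishing that makes $F_{i+1}^{\lin}\to W'$ a minimal presentation, which the paper covers with ``by the same reason''---merely spells out what the paper cites.
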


\begin{proof}
In the proof of Theorem~\ref{when} (b)\implies(a) we have seen that if $\GG$ is a finite linear complex with initial degree $d$ satisfying the conditions (b) of Theorem~\ref{when} and such that  $G_1\to G_0 \iso F_1^{\lin}\to F_0^{\lin}$, then $\GG\iso \FF^{\lin}$.
\end{proof}

\section{The generalized Eagon-Northcott complex}\label{Generalized Eagon-Northcott}

Let $F$ and $G$ be free $S$-modules of rank $m$ and $n$, respectively, with $m\leq n$, and let $\varphi\: G\to F$ be an $S$-module
homomorphism. We choose a basis $f_1,\ldots,f_m$ of $F$ and a basis $g_1,\ldots,g_n$ of $G$. Let $\varphi(g_j)=\sum_{i=1}^m\alpha_{ij}f_i$ for $j=1,\ldots, n$. The matrix $\alpha=(\alpha_{ij})$ describing $\varphi$ with respect to these bases is an $(m\times n)$-matrix with entries in $S$. The ideal of $m$-minors of this matrix is independent of the choice of these bases and is denoted $I_m(\varphi)$. It is know that with suitable grade conditions on $I_m(\varphi)$, the so-called Eagon-Northcott complex provides a free resolution of $I_m(\varphi)$, see \cite{EN}. There are nice and basis free descriptions of the Eagon-Northcott complex, see for example \cite{BV} and \cite{Ei}. For our purpose however a description of the Eagon-Northcott complex and its differentials in terms of natural bases is required. For the convenience of the reader and due to the lack of a suitable reference we recall this description.

We denote by $S(F)$ the symmetric algebra of $F$. Then $S(F)$ is isomorphic to the polynomial ring over $S$ in the variables $f_1,\ldots,f_m$. The module $G\tensor_S S(F)$ is a free $S(F)$-module with basis $g_1\tensor 1,\ldots,g_n\tensor 1$ and $\varphi$ gives rise to the $S(F)$-linear map
\[
G\tensor S(F)\to S(F) \text{ with $g_j\tensor 1\mapsto \varphi(g_j)$ for all $j$,}
\]
which in turn induces the Koszul complex
\[
K(\varphi)\: 0\to \bigwedge^n G\tensor S(F)\to \cdots \to \bigwedge^1 G\tensor S(F)\to \bigwedge^0 G\tensor S(F)\to 0,
\]
whose differential $\delta$ is defined as follows:
\begin{equation}
\label{partial}
\delta(g_{j_1}\wedge g_{j_2}\wedge \cdots \wedge g_{j_i}\tensor 1)=\sum_{k=1}^i(-1)^{k+1}g_{j_1}\wedge g_{j_2}\wedge \cdots \wedge g_{j_{k-1}}\wedge g_{j_{k+1}}\wedge\cdots \wedge  g_{j_i}\tensor \varphi(g_{j_k}).
\end{equation}
The Koszul complex $K(\varphi)$ splits into graded components
\[
K(\varphi)_i\: 0\to \bigwedge^i G\tensor S_0(F)\to \cdots \to \bigwedge^1 G\tensor S_{i-1}(F)\to \bigwedge^0 G\tensor S_i(F)\to 0,
\]
each of which is a complex of free $S$-modules.

We are interested in the $S$-dual of the $(n-m)$-th component of $K(\varphi)$ which is the complex
\begin{equation}
\label{dual}
0\to (\bigwedge^0 G\tensor S_{n-m}(F))^*\to \cdots \to (\bigwedge^{n-m-1} G\tensor S_{1}(F))^* \to (\bigwedge^{n-m} G\tensor S_{0}(F))^*\to 0.
\end{equation}
Let
\[
\mu: \bigwedge^{n-i} G\to \Hom_S(\bigwedge^i G, \bigwedge^n G)
\]
be the isomorphism  which assigns to $a\in \bigwedge^{n-i} G$ the $S$-linear map $\mu_a\in \Hom_S(\bigwedge^i G, \bigwedge^n G)$ with $\mu_a(b)=a\wedge b$. Composing $\mu$ with the isomorphism
\[
\Hom_S(\bigwedge^i G, \bigwedge^n G)\to (\bigwedge^i G)^*
\]
which is induced by the isomorphism $Sg_1\wedge\ldots \wedge g_n\iso S$ with $g_1\wedge\ldots \wedge g_n\mapsto 1$, we obtain the isomorphisms
\begin{eqnarray}
\label{star}
\bigwedge^{n-i} G\iso (\bigwedge^i G)^* \text{ for $i=0,\ldots,n$}.
\end{eqnarray}
Furthermore, if we use that for any two finitely generated free $S$-modules $A$ and $B$ there are natural isomorphisms
\begin{eqnarray}
\label{tensor}
(A\tensor B)^*\iso \Hom(A,B^*)\iso A^*\tensor B^*,
\end{eqnarray}
the complex  (\ref{dual}) becomes the complex
\[
\MC(\varphi)\: 0\to \bigwedge^n G\tensor S_{n-m}(F)^*\to \ldots \to \bigwedge^{m+1} G\tensor S_{1}(F)^* \to \bigwedge^{m} G\tensor S_{0}(F)^*\to 0,
\]
which is called the {\em Eagon-Northcott complex}.

There is a natural augmentation map
\[
\epsilon\:\; \bigwedge^{m} G\tensor S_0(F)^*\to I_m(\varphi)
\]
which assigns  to $g_{i_1}\wedge\cdots \wedge  g_{i_m}\tensor 1$ the  $m$-minor of $\alpha$  with the  columns $i_1,i_2,\ldots,i_m$. It is known that $\grade I_m(\varphi)\leq n-m+1$ and equality holds if and only if $\MC(\varphi)\to I_m(\varphi)\to 0$ is exact, that is, if and only if $\MC(\varphi)$ provides a free resolution of $I_m(\varphi)$, see \cite{BV}.

We now describe the differential of the Eagon-Northcott complex. The map
\[
\delta^*: (\bigwedge^{r} G\tensor S_{n-m-r}(F))^* \to (\bigwedge^{r+1} G\tensor S_{n-m-r-1}(F))^*
\]
is just the dual of the differential $\delta$ defined in (\ref{partial}). Thus  if we set $g_\sigma=g_{j_1}\wedge \cdots \wedge g_{j_r}$ for $\sigma=\{j_1<j_2<\cdots < j_r\}$ and set $f^{\bf{a}}=f_1^{a_1}f_2^{a_2}\cdots f_m^{a_m}$ with $|a|=a_1+\cdots +a_m=n-m-r$, then
\begin{eqnarray*}
\delta(g_\sigma\tensor f^{\bf{a}})& =& \sum_{k=1}^r(-1)^{k+1}g_{\sigma\setminus \{j_k\}}\tensor \varphi(g_{j_k})f^{\bf{a}}\\
&=& \sum_{k=1}^r \sum_{i=1}^m(-1)^{k+1}\alpha_{ij_k}(g_{\sigma\setminus \{j_k\}}\tensor f_1^{a_1}\cdots f_i^{a_i+1}\cdots f_m^{a_m}).
\end{eqnarray*}
Therefore,
\begin{equation}
\label{deltastar}
\delta^*((g_\sigma\tensor f^{\bf{a}})^*) = \sum_{j\in [n]\setminus \sigma}(-1)^{\sign(\sigma,j)} \sum_{i=1\atop a_i>0}^m\alpha_{ij}(g_{\sigma\union \{j\}}\tensor f_1^{a_1}\cdots f_i^{a_i-1}\cdots f_m^{a_m})^*,
\end{equation}
where
$
\sign(\sigma,j)=|\{i\in \sigma\: i<j\}|.
$

\medskip
Let $f^{(\ab)} =f_1^{(a_1)}\cdots f_m^{(a_m)}$ denote  the basis element in $(S_j(F))^*$ which is dual to $f^\ab=f_1^{a_1}\cdots f_m^{a_m}$. Furthermore, we set $f^{(\ab)} = 0$, if $a_i<0$ for some $i$.  Then by using the isomorphisms (\ref{star}) and (\ref{tensor}), we see that $(g_\sigma\tensor f^\ab)^*$ is mapped to $(-1)^{s(\sigma)}g_{\sigma^c}\tensor f^{(\ab)}$, where $\sigma^c=[n]\setminus \sigma$ and where $s(\sigma)$
is defined by the equation $$g_\sigma\wedge g_{\sigma^c}=(-1)^{s(\sigma)}g_{[n]}.$$
Thus, $\delta^*$ identifies with the map
\begin{equation}
\label{enpartial}
\partial(g_\sigma\tensor f^{(a)})=(-1)^{|\sigma^c|}\sum_{k=1}^{m+i}\sum_{i=1}^m(-1)^{k+1}\alpha_{ij_k} (g_{\sigma\setminus \{j_k\}}\tensor f_1^{(a_1)}\cdots f_i^{(a_i-1)}\cdots f_m^{(a_m)}),
\end{equation}
which is the differential of the Eagon-Northcott complex. In this formula, the sign $(-1)^{|\sigma^c|}$ which only depends on the homological degree of $g_\sigma\tensor f^{(a)}$, may be skipped, and we will do this in the final presentation of $\partial$.

Indeed, to see that (\ref{enpartial}) holds, we note that, due to  (\ref{deltastar}),
\begin{eqnarray*}
\label{implies}
&&\delta^*((-1)^{s(\sigma)} g_{\sigma^c}\tensor f^{(a)})\\
&=& \sum_{j\in  \sigma^c}(-1)^{\sign(\sigma,j)} \sum_{i=1\atop a_i>0}^m(-1)^{s(\sigma\union \{j\})}\alpha_{ij}(g_{\sigma^c\setminus \{j\}}\tensor f_1^{(a_1)}\cdots f_i^{(a_i-1)}\cdots f_m^{(a_m)}).\nonumber
\end{eqnarray*}
Thus
\begin{eqnarray*}
\label{againimplies}
&&\delta^*( g_{\sigma}\tensor f^{(a)})\\
&=& \sum_{j\in  \sigma} \sum_{i=1\atop a_i>0}^m(-1)^{s(\sigma^c)+\sign(\sigma^c,j)+s(\sigma^c\union \{j\})}\alpha_{ij}(g_{\sigma\setminus \{j\}}\tensor f_1^{(a_1)}\cdots f_l^{(a_i-1)}\cdots f_m^{(a_m)}).\nonumber
\end{eqnarray*}

Finally
\begin{eqnarray*}
(-1)^{s(\sigma^c\union\{j\})}g_{[n]}&=& g_{\sigma^c\union \{j\}}\wedge g_{\sigma\setminus\{j\}}=(-1)^{|\sigma^c|-\sign(\sigma^c,j)}g_{\sigma^c}\wedge g_j\wedge g_{\sigma\setminus\{j\}}\\
&=& (-1)^{|\sigma^c|-\sign(\sigma^c,j)+\sign(\sigma,j)}g_{\sigma^c}\wedge g_{\sigma}\\
&=&(-1)^{|\sigma^c|-\sign(\sigma^c,j)+\sign(\sigma,j)+s(\sigma^c)}g_{[n]}.
\end{eqnarray*}
It follows that $(-1)^{|\sigma^c|}(-1)^{\sign(\sigma,j)}=(-1)^{s(\sigma^c)+\sign(\sigma^c,j)+s(\sigma^c\union\{j\})}$, as desired.

\medskip
In order to simplify notation we set $\bb(\sigma;\ab)= g_\sigma\tensor f^{(\ab)}$. Then the elements $\bb(\sigma;\ab)$ with $|\sigma|=m+i$  and $\ab=(a_1,\ldots,a_m)$ such that  $a_1+\cdots +a_m=i$ form a basis of $\bigwedge ^{m+i}G\tensor S_i(F)^*$, and
\[
\partial(\bb(\sigma;\ab))=\sum_{k=1}^{m+i}\sum_{\ell=1}^m(-1)^{k+1}\alpha_{\ell j_k}\bb(\sigma\setminus \{j_k\};\ab-\eb_{\ell}).
\]
Here $\eb_1,\ldots, \eb_m$ is the canonical basis of $\ZZ^m$.

\begin{Corollary}
\label{en}
Suppose that $\grade I_m(\varphi)=n-m+1$. Then $I_m(\varphi)$ has a linear resolution and
\[
\beta_i(I_m(\varphi))={n\choose m+i}{m+i-1\choose m-1} \quad \text{for}\quad i=0,\ldots,n-m.
\]
\end{Corollary}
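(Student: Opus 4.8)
The plan is to read everything off the explicit description of the Eagon--Northcott complex $\MC(\varphi)$ obtained above. First I would invoke the grade hypothesis: since $\grade I_m(\varphi)=n-m+1$, the acyclicity criterion recalled just before the augmentation map (see \cite{BV}) guarantees that $\MC(\varphi)\to I_m(\varphi)\to 0$ is exact, so $\MC(\varphi)$ is a free resolution of $I_m(\varphi)$. It then remains only to check that this resolution is minimal and linear, and to count the ranks of its free modules.

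For linearity and minimality I would use the explicit differential
\[
\partial(\bb(\sigma;\ab))=\sum_{k=1}^{m+i}\sum_{\ell=1}^m(-1)^{k+1}\alpha_{\ell j_k}\,\bb(\sigma\setminus\{j_k\};\ab-\eb_\ell)
\]
derived above. Because the entries $\alpha_{\ell j_k}$ of the matrix describing $\varphi$ are linear forms, placing the free module $\bigwedge^{m+i}G\tensor S_i(F)^*$ in homological degree $i$ and internal degree $m+i$ renders $\partial$ homogeneous of degree $0$: a generator $\bb(\sigma;\ab)$ of degree $m+i$ is sent to an $S$-combination, with linear coefficients $\alpha_{\ell j_k}$, of generators $\bb(\sigma\setminus\{j_k\};\ab-\eb_\ell)$ of degree $m+i-1$. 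Thus $\MC(\varphi)$ is a linear complex of initial degree $m$. Moreover each entry $\alpha_{\ell j_k}$ lies in $\mm=(x_1,\dots,x_n)$, so $\partial(\MC(\varphi))\subseteq\mm\,\MC(\varphi)$; this is exactly the condition for minimality. Hence $\MC(\varphi)$ is the minimal graded free resolution of $I_m(\varphi)$, it is linear, and $\beta_i(I_m(\varphi))$ equals the rank of its $i$-th free module.

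It remains to count ranks. The $i$-th free module is $\bigwedge^{m+i}G\tensor_S S_i(F)^*$. Since $G$ is free of rank $n$, $\bigwedge^{m+i}G$ has rank $\binom{n}{m+i}$; since $F$ is free of rank $m$, $S_i(F)$ is free on the monomials $f^{\ab}$ with $|\ab|=i$, of which there are $\binom{m+i-1}{m-1}$, and $S_i(F)^*$ has the same rank. Multiplying gives $\beta_i(I_m(\varphi))=\binom{n}{m+i}\binom{m+i-1}{m-1}$ for $i=0,\dots,n-m$, the upper bound coming from $\bigwedge^{m+i}G=0$ once $m+i>n$.

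Given that the genuinely laborious steps---the basis-free form of the differential and the acyclicity under the grade condition---are already in hand, I do not expect a serious obstacle. The only point needing a moment's care is making the bigrading explicit so that both the linearity of $\MC(\varphi)$ and the minimality criterion $\partial(\MC(\varphi))\subseteq\mm\,\MC(\varphi)$ are transparently satisfied; once that is in place the Betti numbers are a pure dimension count.
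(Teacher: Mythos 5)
Your proof is correct and takes essentially the same route the paper intends: Corollary~\ref{en} is stated there without proof, as an immediate consequence of the acyclicity criterion from \cite{BV} quoted just before the augmentation map, combined with the explicit bases $\bb(\sigma;\ab)$ and the fact that the entries of the differential are linear forms. Your two added details---minimality via $\partial(\MC(\varphi))\subseteq \mm\,\MC(\varphi)$ and the rank count $\binom{n}{m+i}\binom{m+i-1}{m-1}$ with the range bounded by $\bigwedge^{m+i}G=0$ for $m+i>n$---are exactly what the paper leaves implicit.
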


\medskip
Now let $\Delta$ be a simplicial complex on the vertex set $[n]$. We set $\MC_i(\varphi)= \bigwedge^{m+i}G\tensor {S_i(F)}^{*}$ and denote $\MC_i(\Delta;\varphi)$ the free submodule of $\MC_i(\varphi)$ generated by all $\bb(\sigma;\ab)$ such that $\sigma\in \Delta$ with $|\sigma|=m+i$, and $\ab\in \ZZ^m_{\geq 0}$ with $|\ab|=i$.
Since $\partial(\bb(\sigma;\ab))\in \MC_{i-1}(\Delta;\varphi)$ for all  $\bb(\sigma;\ab)\in \MC_i(\Delta;\varphi)$, we obtain the subcomplex
\[
\MC(\Delta;\varphi)\: 0\to \MC_{n-m}(\Delta;\varphi)\to \cdots  \to\MC_1(\Delta;\varphi)\to  \MC_0(\Delta;\varphi)\to 0
\]
of $\MC(\varphi)$ which we call the {\em generalized Eagon-Northcott complex} attached to the simplicial complex $\Delta$ and the module homomorphism $\varphi: G\to F$. In particular, if $\Delta$ is just a simplex on $n$ vertices, then $\MC(\Delta;\varphi)$ coincides with $\MC(\varphi)$.

Moreover, note that the generalized Eagon-Northcott complex of a simplicial complex $\Delta$ is determined by those facets of $\Delta$ whose dimension is at least $m-1$. More precisely, if we obtain the simplicial complex $\Delta'$ from $\Delta$ by removing all facets of dimension less than $m-1$, then the complexes $\MC(\Delta;\varphi)$ and $\MC(\Delta';\varphi)$ are the same.

\section{The generalized Eagon-Northcott complex as a linear strand of a module}\label{generalized as linear strand}

Let $X$ be an $(m\times n)$-matrix of indeterminates $x_{ij}$. We fix a field $K$ and let $S$ be the polynomial ring over $K$ in the variables $x_{ij}$. Moreover, let $\varphi\: G\to F$ be the $S$-module homomorphism of free $S$-modules given by the matrix $X$. In the rest of this paper, we fix this situation.

Now we give a $(\ZZ^m\times\ZZ^n)$-grading to the polynomial ring $S$, by setting $\mathrm{mdeg}(x_{ij})=(e_i,\varepsilon_j)$ where $e_i$ is the $i$-th canonical basis vector of $\ZZ^m$ and $\varepsilon_j$ is the $j$-th canonical basis vector of $\ZZ^n$. Now, let $\Delta$ be a simplicial complex. Then the chain complex $\MC(\Delta;\varphi)$ inherits this grading. More precisely, for each $i$, the degree of a basis element $\bb(\sigma;{\bf{a}})$ of $\MC_i(\Delta;\varphi)$ with $\sigma=\{j_1,\ldots,j_{m+i}\}$ is set to be $({\bf{a}}+{\bf{1}},\gamma)\in \ZZ^m\times\ZZ^n$, where $\gamma=\varepsilon_{j_1}+\cdots+\varepsilon_{j_{m+i}}$, and $\bf{1}$ is the vector in $\ZZ^m$ whose entries are all equal to $1$. Then, one can see that all the homomorphisms in the chain complex $\MC(\Delta;\varphi)$ is homogeneous.

To characterize all simplicial complexes for which the generalized Eagon-Northcott complex $\MC(\Delta;\varphi)$ is the linear strand of a finitely generated graded $S$-module, we need to introduce a concept about simplicial complexes which is crucial in the sequel. Let $\Delta$ be a simplicial complex on the vertex set $V$, and let $\sigma$ be a  subset of $V$. Then recall that $\sigma$ is called a \emph{minimal nonface} of $\Delta$ if $\sigma\notin \Delta$, but all its proper subsets belong to $\Delta$.

\begin{Theorem}\label{missing}
Let $\Delta$ be a simplicial complex on $n$ vertices, and let $m$ be a positive integer. Then the following conditions are equivalent:
\begin{enumerate}
\item[{\em (a)}] $\MC(\Delta;\varphi)$ is the linear strand of a finitely generated graded $S$-module with initial degree~$m$.
\item[{\em (b)}] $\Delta$ has no minimal nonfaces of cardinality $\geq m+2$.
\end{enumerate}
\end{Theorem}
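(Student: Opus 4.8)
The strategy is to apply Theorem~\ref{when} to the linear complex $\GG=\MC(\Delta;\varphi)$ (with initial degree $m$), so that the entire statement reduces to checking when condition (b) of Theorem~\ref{when} holds, namely $H_i(\MC(\Delta;\varphi))_{i+m+j}=0$ for all $i>0$ and $j=0,1$. The first thing I would verify is that $\MC(\Delta;\varphi)$ is genuinely a linear complex with initial degree $m$ in the ordinary $\ZZ$-grading: each $\MC_i(\Delta;\varphi)=\bigwedge^{m+i}G\tensor S_i(F)^*$ is free, its basis elements $\bb(\sigma;\ab)$ sit in total degree $m+i$ (since $\gamma$ contributes $m+i$ column variables while the $f$-part is dual and contributes degree $0$ in the $x_{ij}$), and the differential has linear entries $\alpha_{\ell j_k}=x_{\ell j_k}$. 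Hence Theorem~\ref{when} applies verbatim, and the two equivalences to prove become the single equivalence: condition (b) of Theorem~\ref{when} holds for $\MC(\Delta;\varphi)$ if and only if $\Delta$ has no minimal nonface of cardinality $\geq m+2$.

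\smallskip

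The heart of the argument is to compute the homology $H_i(\MC(\Delta;\varphi))$ in the relevant degrees, and here I would exploit the fine $(\ZZ^m\times\ZZ^n)$-grading introduced just before the statement. Because the differential is homogeneous for this multigrading, the homology splits as a direct sum over multidegrees $(\ab+\mathbf 1,\gamma)$, and one can analyze each strand separately. The $\ZZ^n$-component $\gamma$ is a $0/1$ vector, say the indicator of a subset $T\subseteq[n]$; a basis element $\bb(\sigma;\ab)$ contributes to multidegree $(\ab+\mathbf 1,\gamma)$ only when $\sigma=T$, so within a fixed multidegree the only freedom is in $\ab$. The upshot I expect is that, after fixing $T$ and the total $f$-degree, the relevant piece of $\MC(\Delta;\varphi)$ becomes (up to sign) a piece of a Koszul-type or simplicial complex governed by which faces of $\Delta$ lie inside $T$. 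Concretely, the degree-$(i+m+j)$ part of $H_i$ for $j=0,1$ should be controlled by the subcomplex of $\Delta$ induced on small sets $T$ with $\abs{T}=m+i+j$, and the vanishing of this homology should translate directly into a statement about faces versus nonfaces of $\Delta$ restricted to $T$.

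\smallskip

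The main obstacle, and the step deserving the most care, is precisely this translation: showing that the local homology in degree $i+m+j$ (for $j=0,1$) vanishes \emph{for every} $i>0$ exactly when $\Delta$ has no minimal nonface of size $\geq m+2$. For the forward direction I would argue contrapositively: if $\sigma$ is a minimal nonface with $\abs{\sigma}=m+i+1\geq m+2$, then every proper subset of $\sigma$ is a face, so in the multidegree indexed by $T=\sigma$ all the ``boundary'' basis elements $\bb(\sigma\setminus\{j_k\};\ab-\eb_\ell)$ are present in $\MC(\Delta;\varphi)$ while $\bb(\sigma;\cdot)$ itself is absent; this forced truncation should produce a nonzero cycle that is not a boundary, yielding $H_{i}(\MC(\Delta;\varphi))_{i+m+1}\neq 0$ (the $j=1$ case) and violating (b). For the converse I would show that when no such large minimal nonface exists, the induced complex on each relevant $T$ is either a full Eagon–Northcott strand (if $T\in\Delta$) or is acyclic in the needed degrees because the missing top face is ``compensated'' by a missing codimension-one face, forcing the local complex to be exact in homological degrees $>0$ and low internal degree. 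I would use Corollary~\ref{en} as the model computation for the case $T\in\Delta$, where the strand is exactly that of a full minor ideal and is therefore exact except at the end. Once the local vanishing is established in all multidegrees summing to internal degree $i+m$ and $i+m+1$, condition (b) of Theorem~\ref{when} follows, and invoking that theorem completes the equivalence.
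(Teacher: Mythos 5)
Your plan is correct and is essentially the paper's own proof: you reduce via Theorem~\ref{when} to the vanishing $H_i(\MC(\Delta;\varphi))_{i+m+j}=0$ for $i>0$, $j=0,1$, and settle it multidegree by multidegree using exactness of the full complex $\MC(\varphi)$ --- your forward direction (the cycle $z=\partial(\bb(\sigma;\ab))$ attached to a minimal nonface $\sigma$, which lies in the subcomplex but has no preimage there) is exactly the paper's, and your ``compensation by a missing codimension-one face'' is precisely the paper's converse read contrapositively, where the multidegree forces the unique preimage $f=\lambda\bb(\sigma;\ab)$ and $\abs{\ab}=i+1>0$ guarantees some $\ab-\eb_\ell\geq 0$, so that every facet $\sigma\setminus\{j_t\}$ must be a face (or else the offending term kills the cycle). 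Two cosmetic points: the $j=0$ vanishing is unconditional and needs no analysis of $\Delta$ (a cycle of degree $i+m$ would need a preimage in $\MC_{i+1}(\varphi)_{i+m}=0$), and your claim that ``within a fixed multidegree the only freedom is in $\ab$'' is literally true only in the top homological degree $i+1$ (lower degrees admit monomial coefficients and smaller $\sigma$), which is fortunately the only place you use it.
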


\begin{proof}
We may assume that $m\leq \dim \Delta +1$, because otherwise $\MC(\Delta;\varphi)=0$ and the assertion is trivial. By Theorem~\ref{when} it suffices to  show that $\Delta$ has no minimal nonfaces of cardinality $\geq  m+2$ if and only if $H_i(\MC(\Delta;\varphi))_{i+m+j}=0$ for all $i>0$ and for $j=0,1$.

First suppose that $j=0$, and let $z$ be a nonzero cycle of degree $m+i$ in $\MC_i(\Delta;\varphi)$ for some $i>0$. Then $z$ is also a cycle in $\MC_i(\varphi)$. Since $\MC(\varphi)$ is exact, $z$ is a boundary in $\MC_i(\varphi)$. So there is a nonzero element $f$ of degree $m+i$ in $\MC_{i+1}(\varphi)$ such that $\partial(f)=z$ which is a contradiction, since ${\MC_{i+1}(\varphi)}_{m+i}=0$. This implies that there is no nonzero cycle of degree $m+i$ in $\MC_i(\Delta;\varphi)$, and hence $H_i(\MC(\Delta;\varphi))_{i+m}=0$.

Therefore, to prove the theorem it is enough to show that $\Delta$ has no minimal nonfaces of cardinality $\geq m+2$ if and only if $H_i(\MC(\Delta;\varphi))_{i+m+1}=0$ for all $i>0$. Equivalently, we show that $\Delta$ has a minimal nonface of cardinality at least $m+2$ if and only if $H_i(\MC(\Delta;\varphi))_{i+m+1}\neq 0$ for some $i>0$.

To prove this, suppose that there is a minimal nonface $\sigma=\{j_1,\ldots,j_{m+1+i}\}$ of $\Delta$ for some $i>0$. Therefore, $\sigma\notin \Delta$ and for each $t=1,\ldots,m+1+i$, we have $\sigma\setminus \{j_t\}\in \Delta$. Now let $z=\partial(\bb(\sigma;{\bf{a}}))$ for some ${\bf{a}}\in \ZZ^m_{\geq 0}$ with $|{\bf{a}}|=i+1$. Then $z\in \MC_i(\Delta;\varphi)$, and $z$ is a nonzero element of $\MC_i(\varphi)$. Since $z$ is a cycle in $\MC_i(\varphi)$, it is also a cycle in $\MC_i(\Delta;\varphi)$. On the other hand, the multidegree of $z$ is $(\bf{a}+\bf{1},\gamma)$ where $\gamma=\varepsilon_{j_1}+\cdots+\varepsilon_{j_{m+1+i}}$. Since $\sigma\notin \Delta$, there is no basis element of multidegree $(\bf{a}+\bf{1},\gamma)$ in $\MC_{i+1}(\Delta;\varphi)$, and hence $z$ is not a boundary in $\MC_i(\Delta;\varphi)$. This implies that $H_i(\MC(\Delta;\varphi))_{i+m+1}\neq 0$.

Conversely, suppose that $H_i(\MC(\Delta;\varphi))_{i+m+1}\neq 0$ for some $i>0$. Then there is a cycle $z\in \MC_i(\Delta;\varphi)$ which is not a boundary. We may assume that $z$ is multihomogeneous with $\mathrm{mdeg}(z)=(\bf{a}+\bf{1},\gamma)$ with $\gamma=\varepsilon_{j_1}+\cdots+\varepsilon_{j_{m+1+i}}$ and $|{\bf{a}}|=i+1$. Since $z$ is also a cycle in $\MC_i(\varphi)$ and since $\MC(\varphi)$ is exact, we deduce that $z$ is a boundary in $\MC_i(\varphi)$. Thus there is a nonzero multihomogeneous element $f\in \MC_{i+1}(\varphi)$ such that $\partial(f)=z$. Since $\MC(\varphi)$ is a multigraded resolution, it follows that $\mathrm{mdeg}(f)=(\bf{a}+\bf{1},\gamma)$. Since any two basis elements of $\MC_{i+1}(\varphi)$ have different multidegrees, it follows that $f=\lambda\bb(\sigma;{\bf{a}})$ for some $0\neq \lambda\in K$ with $\sigma=\{j_1,\ldots,j_{m+1+i}\}$. Because $z$ is not a boundary in $\MC_i(\Delta;\varphi)$, the basis element $\bb(\sigma;{\bf{a}})$ does not belong to $\MC_{i+1}(\Delta;\varphi)$, and hence $\sigma\notin \Delta$. On the other hand, we have
\[
z=\partial(f)=\lambda\sum_{k=1}^{m+1+i}\sum_{\ell=1}^m{(-1)}^{k+1}x_{\ell j_{k}}\bb(\sigma\setminus \{j_k\};{\bf{a}}-{\bf{e}}_{\ell}).
\]
Since the basis elements of $\MC_{i+1}(\Delta;\varphi)$ form a subset of the basis elements of $\MC_{i+1}(\varphi)$, it follows from this presentation of $z$ that $\bb(\sigma\setminus \{j_k\};{\bf{a}}-{\bf{e}}_{\ell})\in \MC_{i}(\Delta;\varphi)$ for all $j_k$ and $\ell$ for which $\bb(\sigma\setminus \{j_k\};{\bf{a}}-{\bf{e}}_{\ell})\neq 0$. Since $z$ is not zero, there exist $j_k$ and $l$ with $\bb(\sigma\setminus \{j_k\};{\bf{a}}-{\bf{e}}_{\ell})\neq 0$. Therefore, ${\bf{a}}-{\bf{e}}_{\ell}\in \ZZ^m_{\geq 0}$. Hence $\bb(\sigma\setminus \{j_t\};{\bf{a}}-{\bf{e}}_{\ell})\neq 0$ for all $t=1,\ldots,m+1+i$. It follows that $\bb(\sigma\setminus \{j_t\};{\bf{a}}-{\bf{e}}_{\ell})\in \MC_{i+1}(\Delta;\varphi)$ for all $t=1,\ldots,m+1+i$. Therefore, $\sigma\setminus \{j_t\}$ is a face of $\Delta$ for all $t=1,\ldots,m+1+i$, so that $\sigma$ is a minimal nonface of $\Delta$ of cardinality $m+1+i\geq m+2$, as desired.
\end{proof}

We would like to remark that a subclass of simplicial complexes satisfying condition (b) of Theorem~\ref{missing} has been considered in \cite{Ne} for a different purpose. There, minimal nonfaces are called {\em missing faces}.

Now, we give some examples of other classes of simplicial complexes which satisfy condition (b) of Theorem~\ref{missing}. First recall that a \emph{flag} simplicial complex is a simplicial complex whose minimal nonfaces all have cardinality equal to two. Flag complexes are exactly clique complexes of graphs, i.e. a simplicial complex whose faces are cliques of a graph. In \cite{KN}, a generalization of flag complexes has been introduced which we recall it in the following.

In \cite{KN} a subset $T$ of $V$ is called a \emph{critical clique} of $\Delta$ if each pair of vertices form a $1$-dimensional face of $\Delta$ and $T\setminus \{v\}\in \Delta$ for some $v\in T$. Let $\dim \Delta=d-1$ and let $i$ be an integer with $1\leq i\leq d$. Then $\Delta$ is called $i$-\emph{banner} if  every critical clique $T$ of cardinality at least $i+1$ is a face of $\Delta$. In \cite[Lemma~3.3]{KN} it is shown that a simplicial complex is $1$-banner if and only if it is $2$-banner, and this is equivalent to being flag.

Now, let $1\leq m\leq d$. Then an $(m+1)$-banner satisfies condition (b) of Theorem~\ref{missing}, namely has no minimal nonfaces of cardinality $\geq m+2$, see also \cite[Lemma~3.6]{KN}. In particular, all flag complexes satisfy condition (b) of Theorem~\ref{missing}, because $i$-banner implies $(i+1)$-banner for all $i$.

We also would like to remark that not all simplicial complexes with no minimal nonfaces of cardinality $\geq m+2$ are $(m+1)$-banner. For example, let $\Delta$ be the simplicial complex with the vertex set $\{1,2,3,4\}$ whose facets are $\{1,2,3\}$, $\{1,3,4\}$ and $\{2,3,4\}$ and let $m=2$. Then $\Delta$ has no minimal nonfaces of cardinality $4$, but it is not $3$-banner. Indeed, $T=\{1,2,3,4\}\notin \Delta$ but it is a critical clique of $\Delta$.

\section{The linear strand of determinantal facet ideals and binomial edge ideals}\label{determinantal}

A \emph{clutter} $C$ on the vertex set $[n]$ is a collection of subsets of $[n]$ such that there is no containment between its elements. An element of $C$ is called a \emph{circuit}. In this paper, we assume that each vertex of a clutter $C$ belongs to some circuits of $C$. If all circuits of $C$ have the same cardinality $d$, then $C$ is said to be an \emph{$d$-uniform} clutter.

Let $C$ be an $m$-uniform clutter. To each circuit $\tau\in C$ with $\tau=\{j_1,\ldots,j_m\}$ and $1\leq j_1<j_2<\cdots <j_m\leq n$ we assign the $m$-minor ${\bf{m}}_\tau$ of $X$ which is determined  by the columns $1\leq j_1<j_2<\cdots <j_m\leq n$. We denote by  $J_C$ the ideal in $S$ which is generated by the minors ${\bf{m}}_\tau$ with $\tau\in C$. This ideal which has first been considered in \cite{EHHM} is called the {\em determinantal facet ideal} of $C$ since the circuits of $C$ may be considered as the facets of a simplicial complex. In the case that $C$ is a $2$-uniform clutter, $C$  may be viewed as  a graph $G$, and in this case $J_\MC=J_G$, where $J_G$ is the binomial edge ideal of $G$, as defined in \cite{HHHKR}.

A {\em clique} of $C$ is a subset $\sigma$ of $[n]$ such that each $m$-subset $\tau$ of $\sigma$ is a circuit of $C$. We denote by $\Delta(C)$ the simplicial complex whose faces are the cliques of $C$ which is called the {\em clique complex} of $C$. A clutter is called {\em complete} if its clique complex is a simplex.

Note that the clique complex of an $m$-clutter has no minimal nonface of cardinality $\geq m+2$. The example at the end of Section~\ref{generalized as linear strand} also shows that not all simplicial complexes which satisfy condition (b) of Theorem~\ref{missing} are clique complexes of clutters. Here we also give an example of a $3$-clutter whose clique complex is not $4$-banner, although all clique complexes of $2$-clutters are $3$-banner, as mentioned before. Let $C$ be the $3$-clutter on the vertices $\{1,\ldots,6\}$ with the circuits $\{1,2,3\}$, $\{1,3,4\}$, $\{2,3,4\}$, $\{3,4,5\}$, $\{3,4,6\}$, $\{3,5,6\}$ and $\{4,5,6\}$. Then $\Delta(C)=\langle \{1,2,3\}, \{1,3,4\}, \{2,3,4\}, \{3,4,5,6\}\rangle$. Thus $\Delta(C)$ is not $4$-banner, since $\{1,2,3,4\}$ is a critical clique of $\Delta(C)$ which is not a face.

\medskip
The following theorem is the main result of this section.

\begin{Theorem}
\label{hope}
Let $C$ be an $m$-uniform clutter on the vertex set $[n]$, and let $\FF$ be the minimal graded free resolution of $J_C$. Then
\[
\FF^{\lin}\iso \MC(\Delta(C);\varphi).
\]
\end{Theorem}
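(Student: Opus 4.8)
The plan is to apply Corollary~\ref{who} with $W=J_C$ and $\GG=\MC(\Delta(C);\varphi)$. Two things have to be checked: that $\MC(\Delta(C);\varphi)$ is a finite linear complex of initial degree $m$ satisfying condition~(b) of Theorem~\ref{when}, and that its bottom two terms $\MC_1(\Delta(C);\varphi)\to\MC_0(\Delta(C);\varphi)$ are isomorphic to $F_1^{\lin}\to F_0^{\lin}$. The first is immediate: the clique complex $\Delta(C)$ of an $m$-uniform clutter has no minimal nonface of cardinality $\geq m+2$, so by Theorem~\ref{missing} the complex $\MC(\Delta(C);\varphi)$ is the linear strand of some finitely generated graded module of initial degree $m$, and hence, by the implication (a)\implies(b) of Theorem~\ref{when}, satisfies~(b). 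For $\MC_0$, the basis elements $\bb(\tau;\mathbf 0)$ are indexed by the faces $\tau\in\Delta(C)$ with $|\tau|=m$, which are exactly the circuits of $C$, and under the augmentation $\epsilon$ they are sent to the minors $\mathbf{m}_\tau$. Since distinct maximal minors are $K$-linearly independent forms of degree $m$, they form a minimal generating set of $J_C$, so $\epsilon$ identifies $\MC_0(\Delta(C);\varphi)$ with $F_0=F_0^{\lin}=S(-m)^{|C|}$.

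It remains to show that $\partial\colon\MC_1(\Delta(C);\varphi)\to\MC_0(\Delta(C);\varphi)$ is a minimal free cover of the submodule $L\subseteq F_0$ generated by the linear (degree $m+1$) syzygies of $J_C$; for then $\MC_1\to\MC_0$ and $F_1^{\lin}\to F_0^{\lin}$ are two minimal covers of the same submodule $L$ of $F_0$, hence isomorphic as maps. Writing out the differential, each generator $\bb(\sigma;\eb_\ell)$ of $\MC_1(\Delta(C);\varphi)$, with $\sigma\in\Delta(C)$, $|\sigma|=m+1$, $\ell\in[m]$, satisfies
\[
\partial(\bb(\sigma;\eb_\ell))=\sum_{k=1}^{m+1}(-1)^{k+1}x_{\ell j_k}\,\bb(\sigma\setminus\{j_k\};\mathbf 0),
\]
a linear syzygy of multidegree $(\eb_\ell+\mathbf 1,\gamma_\sigma)$, where $\gamma_\sigma=\sum_{j\in\sigma}\varepsilon_j$.

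The core of the argument is that these syzygies exhaust all linear syzygies of $J_C$, which I verify through the fine $\ZZ^m\times\ZZ^n$-grading. A multihomogeneous syzygy of degree $m+1$ has multidegree $(\eb_\ell+\mathbf 1,\gamma')$ for some $\ell$ and $\gamma'$, and its nonzero terms are scalar multiples of $x_{\ell j}\mathbf{m}_\tau$ with $\tau\in C$ and $\gamma'=\gamma_\tau+\varepsilon_j$; a direct check shows that when $\gamma'$ is not squarefree the only admissible term is a single $x_{\ell j}\mathbf{m}_\tau$, which cannot vanish, so necessarily $\gamma'=\gamma_\sigma$ is squarefree of cardinality $m+1$. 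Any syzygy of $J_C$ in multidegree $(\eb_\ell+\mathbf 1,\gamma_\sigma)$ extends by zero to a syzygy of the ideal $I_m(\varphi)$ of all maximal minors; since $\MC(\varphi)$ resolves $I_m(\varphi)$ minimally (Corollary~\ref{en}), the space of syzygies in this multidegree is one-dimensional, spanned by the Eagon--Northcott relation $\partial(\bb(\sigma;\eb_\ell))$, whose coefficient on each $\mathbf{m}_{\sigma\setminus\{j_k\}}$ is nonzero. Consequently, if $\sigma\in\Delta(C)$ the syzygy space of $J_C$ equals this whole line, while if some $m$-subset $\sigma\setminus\{j_{k_0}\}\notin C$, the nonzero coefficient on the missing minor forces the scalar to vanish, leaving no syzygy. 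Hence $L=\mathrm{im}(\partial)$, generated precisely by the $\partial(\bb(\sigma;\eb_\ell))$ with $\sigma\in\Delta(C)$.

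Finally these generators are minimal: each lies in its own multidegree and has ordinary degree $m+1$, whereas $L$ vanishes in degrees $<m+1$, so $(\mm L)_{m+1}=0$ and no generator lies in $\mm L$. Thus $\MC_1(\Delta(C);\varphi)\to L$ is a minimal free cover, giving $\MC_1\to\MC_0\iso F_1^{\lin}\to F_0^{\lin}$, and Corollary~\ref{who} then yields $\FF^{\lin}\iso\MC(\Delta(C);\varphi)$. I expect the main obstacle to be the multigraded syzygy computation of the third paragraph; the crucial subtlety is that removing a facet of $\Delta(C)$ kills exactly the corresponding Eagon--Northcott syzygy, which relies on each graded syzygy space of $I_m(\varphi)$ being one-dimensional and its generator being supported on all of its minors.
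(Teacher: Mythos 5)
Your proposal is correct and follows essentially the same route as the paper: reduce via Corollary~\ref{who} and Theorem~\ref{missing} to identifying $F_1^{\lin}\to F_0^{\lin}$ with $\MC_1(\Delta(C);\varphi)\to\MC_0(\Delta(C);\varphi)$, then use the fine $\ZZ^m\times\ZZ^n$-grading and the exactness of $\MC(\varphi)$ to show that the degree-$(m+1)$ syzygies of $J_C$ in a squarefree multidegree are exactly the Eagon--Northcott relations $\partial(\bb(\sigma;\eb_\ell))$ with $\sigma$ an $(m+1)$-clique, a missing circuit forcing the relation to vanish. Your packaging via two minimal free covers of the submodule $L=Z^{\lin}$ is only a cosmetic variant of the paper's explicit vector-space isomorphism ${\MC_1(\Delta(C);\varphi)}_{m+1}\to Z_{m+1}$, with the same case analysis (non-squarefree versus squarefree $\ZZ^n$-degree) and the same key uniqueness of the multidegree of $\bb(\sigma;\eb_\ell)$ in $\MC_1(\varphi)$.
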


\begin{proof}
To prove the theorem, by Corollary~\ref{who} and Theorem~\ref{missing}, it suffices to prove the following
\[
F_1^{\lin}\to F_0^{\lin}\iso \MC_1(\Delta(C);\varphi) \to \MC_0(\Delta(C);\varphi).
\]

First, note that $J_C$ is a homogeneous ideal of $S$ with respect to the $(\ZZ^m\times \ZZ^n)$-multigrading introduced in Section~\ref{generalized as linear strand}. Now, we define
\[
{\MC}_0(\Delta(C);\varphi)\stackrel{\psi}{\longrightarrow} J_C
\]
with $\psi(\bb(\sigma;{\bf{0}}))={\bf{m}}_{\sigma}$ for each $\sigma=\{j_1,\ldots,j_m\}\in C$, so that it is a minimal free presentation of $J_C$. Since $\MC(\varphi)$ is the resolution of $I_m(\varphi)$, the following commutative diagram implies that $\psi \circ \partial =0$.

\begin{displaymath}\label{diagram}
\xymatrix{
\MC_1(\Delta(C);\varphi) \ar[r]^{\partial} \ar@{^{(}->}[d] &
\MC_0(\Delta(C);\varphi) \ar[r]^{\psi} \ar@{^{(}->}[d] &
J_C \ar@{^{(}->}[d] \\
\MC_1(\varphi) \ar[r]^{\partial} & \MC_0(\varphi) \ar[r] & I_m(\varphi)}
\end{displaymath}

Let $Z=\ker \psi$ and $Z^{\lin}$ be the $S$-module generated by the elements of $Z$ whose degree is $m+1$. Since $\MC_1(\Delta(C);\varphi)$ is generated in degree $m+1$, it follows that $\partial$ induces a homogeneous homomorphism $\MC_1(\Delta(C);\varphi)\stackrel{\partial'}\longrightarrow Z^{\lin}$. The desired result follows once we have shown that $\partial'$ induces an isomorphism of vector spaces ${\MC_1(\Delta(C);\varphi)}_{m+1}\stackrel{\partial'}\longrightarrow {(Z^{\lin})}_{m+1}=Z_{m+1}$.

Let $z$ be a nonzero element of $Z^{\lin}$ of multidegree $({\bf{e}}_s+\bf{1},\gamma)$ for some $s=1,\ldots,m$ where $|\gamma|=m+1$. Let $\gamma=\varepsilon_{j_1}+\cdots+\varepsilon_{j_{m+1}}$. Since $z\neq 0$, at least $m$ vectors among $\varepsilon_{j_1},\ldots,\varepsilon_{j_{m+1}}$ are distinct, say $\varepsilon_{j_1},\ldots,\varepsilon_{j_{m}}$, and the basis element $\bb(\sigma;\bf{0})$ appears in the presentation of $z$ as the linear combination of the basis elements of $\MC_0(\Delta(C);\varphi)$, where $\sigma=\{j_1,\ldots,j_{m}\}$.

First suppose that $j_{m+1}\in \sigma$. Then according to the $(\ZZ^m\times \ZZ^n)$-multigrading, the only basis element of ${\MC}_0(\Delta(C),\varphi)$ which contributes to this multidegree in $z$, is $\bb(\sigma;{\bf{0}})$. Then, it follows that $z=\lambda x_{sj_{m+1}}\bb(\sigma;{\bf{0}})$ for some $0\neq \lambda\in K$, and hence we have $\psi(z)\neq 0$, a contradiction. Therefore, $j_{m+1}\notin \sigma$. Let $\tau=\{j_1,\ldots,j_{m+1}\}$, and without loss of generality assume that $j_1<\cdots<j_{m+1}$. Then all possible basis elements of ${\MC}_0(\Delta(C),\varphi)$ which could contribute to this multidegree in $z$ are of the form $\bb(\tau\setminus \{j_k\};{\bf{0}})$. By comparing the multidegrees, we observe that $z=\sum_{k=1}^{m+1}\lambda_kx_{sj_k}\bb(\tau\setminus \{j_k\};{\bf{0}})$ for some $\lambda_k\in K$. From the above diagram it follows that $z$ is a boundary of $\MC(\varphi)$. This implies that there exists a nonzero multihomogeneous element $f\in {\MC}_1(\Delta;\varphi)$ with $\partial(f)=z$. Since $\mathrm{mdeg}(f)=\mathrm{mdeg}(z)$, it follows that $f=\lambda {\bf{b}}(\tau;{\bf{e}}_s)$ for some $0\neq \lambda\in K$. Therefore, $z=\partial(f)=\lambda \sum_{k=1}^{m+1}{(-1)}^{k+1}x_{sj_k}\bb(\tau\setminus \{j_k\};{\bf{0}})$. This implies that for each $k=1,\ldots,m+1$, we have $\lambda_k={(-1)}^{k+1}\lambda$ and the basis element $\bb(\tau\setminus \{j_k\};{\bf{0}})$ appears in the presentation of $z$ with a nonzero coefficient. Hence for each $k=1,\ldots,m+1$, $\tau\setminus \{j_k\}$ is a circuit of $C$ which implies that $\tau$ is a clique of $C$. This shows that the set $\{r_s(\tau):1\leq s\leq m, \tau\in \Delta(C), |\tau|=m+1\}$ with $r_s(\tau)=\sum_{k=1}^{m+1}{(-1)}^{k+1}x_{sj_k}\bb(\tau\setminus \{j_k\};{\bf{0}})$ for each clique $\tau=\{j_1,\ldots,j_{m+1}\}$ of $C$, is a system of generators of $Z^{\mathrm{lin}}$. Since $\mathrm{mdeg}(r_s(\tau))\neq \mathrm{mdeg}(r_{s'}(\tau'))$ if $(s,\tau)\neq (s',\tau')$, we conclude that these elements form a basis of $Z_{m+1}$. Moreover, each basis element ${\bf{b}}(\tau;e_s)$ is mapped by $\partial'$ to $r_s(\tau)$. Thus, we get the desired result.
\end{proof}

Recall that for a simplicial complex $\Delta$ and an integer $i$, the $i$-th skeleton $\Delta^{(i)}$ of $\Delta$ is the subcomplex of $\Delta$ whose faces are those faces of $\Delta$ whose dimension is at most $i$.

\begin{Corollary}\label{skeleton}
Let $\Delta$ be a simplicial complex whose facets all have dimension at least $m-1$, and let $C$ be the $m$-clutter whose circuits are exactly the facets of $\Delta^{(m-1)}$. Then $\MC(\Delta;\varphi)$ is the linear strand of $J_C$ if and only if $\Delta=\Delta(C)$.
\end{Corollary}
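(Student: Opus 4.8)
The plan is to invoke Theorem~\ref{hope}, which gives $\FF^{\lin}\iso\MC(\Delta(C);\varphi)$, so that the assertion reduces to showing that $\MC(\Delta;\varphi)$ and $\MC(\Delta(C);\varphi)$ are isomorphic if and only if $\Delta=\Delta(C)$. One implication is immediate: if $\Delta=\Delta(C)$ then $\MC(\Delta;\varphi)=\MC(\Delta(C);\varphi)\iso\FF^{\lin}$, so $\MC(\Delta;\varphi)$ is the linear strand of $J_C$. All of the content lies in the converse.

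I first record two structural facts. Since $C$ is exactly the set of $m$-element faces of $\Delta$ and $\Delta$ is closed under passage to subsets, every face $\sigma$ of $\Delta$ with $|\sigma|\geq m$ has all of its $m$-subsets in $C$ and is therefore a clique of $C$; hence $\Delta\subseteq\Delta(C)$. On the other hand, $\Delta(C)$ is the clique complex of an $m$-clutter, hence is generated by its faces of cardinality $\geq m$, so that every facet of $\Delta(C)$ has cardinality $\geq m$; thus $\Delta(C)$, just like $\Delta$, has all of its facets of dimension $\geq m-1$.

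Next I compare ranks. Directly from the definition, $\MC_i(\Delta;\varphi)$ is free on the symbols $\bb(\sigma;\ab)$ with $\sigma\in\Delta$, $|\sigma|=m+i$ and $\ab\in\ZZ^m_{\geq 0}$, $|\ab|=i$; hence $\rk\MC_i(\Delta;\varphi)=\binom{m+i-1}{m-1}f_{m+i-1}(\Delta)$, and the identical formula holds for $\Delta(C)$. If $\MC(\Delta;\varphi)$ is the linear strand of $J_C$, then $\MC(\Delta;\varphi)\iso\MC(\Delta(C);\varphi)$ yields a free-module isomorphism in each homological degree, and hence an equality of ranks; cancelling the nonzero factor $\binom{m+i-1}{m-1}$ leaves $f_k(\Delta)=f_k(\Delta(C))$ for all $k\geq m-1$. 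Since $\Delta\subseteq\Delta(C)$ and the sets of faces are finite, this forces $\Delta$ and $\Delta(C)$ to have exactly the same faces of dimension $\geq m-1$.

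It remains to promote this agreement on the top skeleta to full equality, which is the only delicate point. Let $\tau\in\Delta(C)$ with $\dim\tau<m-1$. Because every facet of $\Delta(C)$ has dimension $\geq m-1$, the face $\tau$ is contained in some facet $F$ with $\dim F\geq m-1$; by the previous paragraph $F$ is then also a face of $\Delta$, so $\tau\in\Delta$ by down-closedness. Therefore $\Delta(C)\subseteq\Delta$, which together with $\Delta\subseteq\Delta(C)$ gives $\Delta=\Delta(C)$. The essential mechanism is the control of the low-dimensional skeleton: it rests on the fact that both complexes have all of their facets in dimension $\geq m-1$---a hypothesis for $\Delta$, and a property of clique complexes of $m$-clutters for $\Delta(C)$.
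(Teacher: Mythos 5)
Your argument is correct and takes essentially the same route as the paper's own proof, which consists of the single remark that the assumptions give $\Delta\subseteq\Delta(C)$, so that the conclusion follows from Theorem~\ref{hope}. The details you supply---the rank count $\rk\MC_i(\Delta;\varphi)=\binom{m+i-1}{m-1}f_{m+i-1}(\Delta)$ forcing the faces of dimension $\geq m-1$ to agree, and the recovery of the low-dimensional faces from the fact that both $\Delta$ and $\Delta(C)$ have all facets of dimension $\geq m-1$---are exactly what the paper leaves implicit.
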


\begin{proof}
It is enough to note that by our assumptions we have $\Delta \subseteq \Delta(C)$, so that by Theorem~\ref{hope} the conclusion follows.
\end{proof}

Recall that the $f$-vector $(f_0(\Delta),f_1(\Delta),\ldots)$ of a simplicial complex $\Delta$ is the integer vector such that $f_t(\Delta)=|\{\sigma\in \Delta:\dim \sigma=t\}|$ for each $t=0,\ldots,\dim \Delta$. The following corollary is a consequence of Theorem~\ref{hope}. In particular, in the case $m=2$, it proves the conjecture in \cite[page~338]{SK}.

\begin{Corollary}\label{linearbetti}
Let $C$ be an $m$-uniform clutter. Then
\[
\beta_{i,i+m}(J_C)={m+i-1\choose m-1} f_{m+i-1}(\Delta(C)),
\]
for all $i$.
\end{Corollary}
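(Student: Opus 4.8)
The plan is to derive the Betti number formula directly from the structural description of the linear strand provided by Theorem~\ref{hope}. By that theorem, the linear strand $\FF^{\lin}$ of the minimal graded free resolution of $J_C$ is isomorphic to the generalized Eagon--Northcott complex $\MC(\Delta(C);\varphi)$. Since an isomorphism of complexes preserves the ranks of the free modules in each homological degree, the graded Betti number $\beta_{i,i+m}(J_C)$, which by definition is the rank of $F_i^{\lin}$, equals the rank of $\MC_i(\Delta(C);\varphi)$. Thus the entire problem reduces to counting a basis of $\MC_i(\Delta(C);\varphi)$.

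First I would recall the explicit description of this free module given in Section~\ref{Generalized Eagon-Northcott}: $\MC_i(\Delta;\varphi)$ is the free $S$-module with basis the elements $\bb(\sigma;\ab)$ where $\sigma\in\Delta(C)$ is a face with $|\sigma|=m+i$ and $\ab\in\ZZ^m_{\geq 0}$ satisfies $|\ab|=i$. Since the basis factors as a choice of such a face $\sigma$ together with an independent choice of such an exponent vector $\ab$, the rank is simply the product of the number of admissible $\sigma$ and the number of admissible $\ab$. The faces $\sigma$ of cardinality $m+i$ are exactly the faces of dimension $m+i-1$, and there are $f_{m+i-1}(\Delta(C))$ of them by the definition of the $f$-vector recalled just before the corollary.

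Next I would count the exponent vectors. The number of $\ab=(a_1,\ldots,a_m)\in\ZZ^m_{\geq 0}$ with $a_1+\cdots+a_m=i$ is the standard multiset coefficient $\binom{m+i-1}{m-1}$, obtained by the usual stars-and-bars argument. Multiplying the two counts gives
\[
\beta_{i,i+m}(J_C)=\rank\MC_i(\Delta(C);\varphi)=\binom{m+i-1}{m-1}\,f_{m+i-1}(\Delta(C)),
\]
which is exactly the claimed formula. I would finally note that the formula holds trivially in the boundary cases as well: when $i$ exceeds $\dim\Delta(C)+1-m$ there are no faces of dimension $m+i-1$, so $f_{m+i-1}(\Delta(C))=0$ and both sides vanish, in agreement with the complex $\MC(\Delta(C);\varphi)$ terminating at homological degree $n-m$.

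There is no real obstacle here, as the corollary is a direct bookkeeping consequence of Theorem~\ref{hope}: all the substantive work — establishing that $\MC(\Delta(C);\varphi)$ genuinely is the linear strand — has already been carried out. The only point requiring minor care is the interplay of indices, namely matching the homological degree $i$ of the strand with the face dimension $m+i-1$ and confirming that the multidegree grading forces distinct basis elements to be genuinely independent, so that the naive count of basis elements is indeed the rank. Both are immediate from the definitions in Section~\ref{Generalized Eagon-Northcott}.
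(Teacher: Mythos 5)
Your proposal is correct and matches the paper's own (implicit) argument: the paper states Corollary~\ref{linearbetti} as a direct consequence of Theorem~\ref{hope}, and the intended reasoning is exactly yours, namely that $\beta_{i,i+m}(J_C)=\rank F_i^{\lin}=\rank \MC_i(\Delta(C);\varphi)$, whose basis elements $\bb(\sigma;\ab)$ are counted by choosing a face $\sigma\in\Delta(C)$ of cardinality $m+i$ in $f_{m+i-1}(\Delta(C))$ ways and an exponent vector $\ab\in\ZZ^m_{\geq 0}$ with $|\ab|=i$ in $\binom{m+i-1}{m-1}$ ways. Your additional remarks on the vanishing boundary cases and on the multigrading are sound but not needed beyond what the paper already sets up.
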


Another straightforward consequence of Theorem~\ref{hope} is the following result.

\begin{Corollary}\label{projdim}
Let $C$ be an $m$-uniform clutter. Then the length of the linear strand of $J_C$ is equal to $\dim \Delta(C)-m+1$. In particular, $\projdim J_C \geq \dim \Delta(C)-m+1$.
\end{Corollary}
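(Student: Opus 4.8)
The plan is to read off both claims directly from the explicit description of the linear strand provided by Theorem~\ref{hope}. By that theorem, the linear strand $\FF^{\lin}$ of $J_C$ is isomorphic to the generalized Eagon-Northcott complex $\MC(\Delta(C);\varphi)$, whose terms are $\MC_i(\Delta(C);\varphi)=\bigwedge^{m+i}G\tensor S_i(F)^*$ restricted to basis elements $\bb(\sigma;\ab)$ with $\sigma\in\Delta(C)$, $|\sigma|=m+i$, and $\ab\in\ZZ^m_{\geq 0}$ with $|\ab|=i$. The rank of $\MC_i(\Delta(C);\varphi)$ thus equals the number of such basis elements, which factors as a product of the number of admissible $\sigma$ and the number of admissible $\ab$.

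First I would compute this rank precisely. The faces $\sigma\in\Delta(C)$ with $|\sigma|=m+i$ are exactly the faces of dimension $m+i-1$, so there are $f_{m+i-1}(\Delta(C))$ of them. For each fixed $\sigma$, the multi-indices $\ab\in\ZZ^m_{\geq 0}$ with $a_1+\cdots+a_m=i$ are counted by the standard stars-and-bars formula, giving $\binom{m+i-1}{m-1}$. Since the choices of $\sigma$ and $\ab$ are independent, the rank of $\MC_i(\Delta(C);\varphi)$ is $\binom{m+i-1}{m-1}f_{m+i-1}(\Delta(C))$. Because $\MC(\Delta(C);\varphi)\iso\FF^{\lin}$ and $F_i^{\lin}=S(-i-m)^{\beta_{i,i+m}(J_C)}$, comparing ranks in homological degree $i$ yields $\beta_{i,i+m}(J_C)=\binom{m+i-1}{m-1}f_{m+i-1}(\Delta(C))$, which is exactly Corollary~\ref{linearbetti}. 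For the length statement of Corollary~\ref{projdim}, the complex $\MC(\Delta(C);\varphi)$ has $\MC_i(\Delta(C);\varphi)\neq 0$ precisely when $\Delta(C)$ has a face of dimension $m+i-1$; the top nonzero term occurs at $i$ with $m+i-1=\dim\Delta(C)$, i.e. $i=\dim\Delta(C)-m+1$, so the length of the linear strand equals $\dim\Delta(C)-m+1$.

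For the final ``in particular'' assertion, I would note that the length of the linear strand of any module is at most the projective dimension, since the linear strand is a subcomplex of the minimal free resolution $\FF$ and $F_i^{\lin}$ is a direct summand of $F_i$: whenever $F_i^{\lin}\neq 0$ we have $F_i\neq 0$, so $\projdim J_C\geq \length(\FF^{\lin})=\dim\Delta(C)-m+1$. This is essentially immediate once the length has been identified.

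There is no serious obstacle here; both corollaries are formal consequences of Theorem~\ref{hope} together with an elementary count of basis elements. The only point requiring a moment's care is confirming that the basis of $\MC_i(\Delta(C);\varphi)$ is indexed independently by $\sigma$ and $\ab$, so that the product formula $\binom{m+i-1}{m-1}f_{m+i-1}(\Delta(C))$ is correct—but this is immediate from the definition of the generalized Eagon-Northcott complex given in Section~\ref{Generalized Eagon-Northcott}.
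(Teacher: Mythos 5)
Your proposal is correct and is exactly the argument the paper intends: the paper states Corollary~\ref{projdim} without proof as a straightforward consequence of Theorem~\ref{hope}, and your count of the basis elements $\bb(\sigma;\ab)$ (yielding rank $\binom{m+i-1}{m-1}f_{m+i-1}(\Delta(C))$, so the top nonzero term sits at $i=\dim\Delta(C)-m+1$) together with the observation that $F_i^{\lin}$ is a direct summand of $F_i$ fills in precisely the omitted details.
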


The projective dimension of  the determinantal facet ideals in general is bigger than the lower bound given in Corollary~\ref{projdim}. This could be the case, even for $2$-uniform clutters, namely binomial edge ideals of graphs. For example, it follows from \cite[Theorem~1.1]{EHH} that for a block graph $G$ with $n$ vertices, $\projdim J_G=n-c-1$ where $c$ is the number of connected components of $G$. One can see that if $G$ is a non-complete block graph $G$ over $n$ vertices, $n-c-1>\dim \Delta(G)-1$, and hence in this case the lower bound given in Corollary~\ref{projdim} is strictly less than the projective dimension.

\section{Determinantal facet ideals with linear resolution}\label{linear resolution}

In this section, we determine when the minimal graded free resolution of determinantal facet ideals coincide with their linear strand. In other words, we characterize all $m$-uniform clutters whose determinantal facet ideal has a linear resolution. Moreover, we show that having a linear resolution is equivalent to being linearly presented for such ideals. Indeed, applying Theorem~\ref{hope}, we show that these properties occur for $J_C$ if and only if $C$ is a complete clutter.

Recall that a graded ideal $I$ in $S$ which is generated in degree $d$ is called \emph{linearly presented} if $\beta_{1,j}(I)=0$ for all $j\neq d+1$. Moreover, $I$ is said to have a \emph{linear resolution} if $\beta_{i,j}(I)=0$ for all $i$ and $j\neq i+d$.

The following theorem is the main result of this section which in particular recovers the case of binomial edge ideals from \cite{KS}. Here, we consider $S$ as a $\ZZ^n$-graded ring by setting $\deg(x_{ij})=\varepsilon_j$ for all $i=1,\ldots,m$, where by $\varepsilon_j$ we mean the $j$-th canonical basis vector of $\ZZ^n$.

\begin{Theorem}\label{linear res}
Let $C$ be an $m$-uniform clutter. Then the following conditions are equivalent:
\begin{enumerate}
\item[{\em (a)}] $J_C$ has a linear resolution.
\item[{\em (b)}] $J_C$ is linearly presented.
\item[{\em (c)}] $C$ is a complete clutter.
\end{enumerate}
\end{Theorem}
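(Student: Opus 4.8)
The plan is to establish the cycle $(a)\Rightarrow(b)\Rightarrow(c)\Rightarrow(a)$, in which only $(b)\Rightarrow(c)$ is substantial. The implication $(a)\Rightarrow(b)$ is immediate, since a linear resolution has $\beta_{1,j}(J_C)=0$ for every $j\neq m+1$. For $(c)\Rightarrow(a)$: if $C$ is complete then, because every vertex of $[n]$ lies in a circuit, $C$ is forced to be the set of \emph{all} $m$-subsets of $[n]$, so that $J_C=I_m(\varphi)$ is the full ideal of maximal minors of the generic matrix $X$. As $X$ is generic, $\grade I_m(\varphi)=n-m+1$, and Corollary~\ref{en} then shows that $I_m(\varphi)$ has a linear resolution. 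So everything reduces to $(b)\Rightarrow(c)$.

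To prepare $(b)\Rightarrow(c)$ I would first reformulate linear presentation. By Theorem~\ref{hope} the linear strand of $J_C$ is $\MC(\Delta(C);\varphi)$, and by the remark following Theorem~\ref{when}, $J_C$ is linearly presented if and only if $J_C=\Coker\big(\partial\colon\MC_1(\Delta(C);\varphi)\to\MC_0(\Delta(C);\varphi)\big)$. Writing $\psi\colon\MC_0(\Delta(C);\varphi)\to J_C$ for the augmentation and $Z=\ker\psi$ for the first syzygy module, and recalling from the proof of Theorem~\ref{hope} that $\im\partial$ is spanned by the linear syzygies $r_s(\tau)$ of the $(m+1)$-cliques $\tau$ of $C$, this says exactly that $Z=Z^{\lin}$, i.e.\ that every syzygy of $J_C$ is generated by the $r_s(\tau)$. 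I would also record that every minimal nonface of $\Delta(C)$ has cardinality exactly $m$: cardinality $\le m-1$ is impossible (such a set has no $m$-subset, hence is automatically a face), cardinality $\ge m+2$ is excluded by Theorem~\ref{missing}, and cardinality $m+1$ is impossible since then all its $m$-subsets would be faces, hence circuits, forcing it to be a face. Consequently, if $C$ is not complete there is an $m$-subset $\sigma=\{j_1,\dots,j_m\}\subseteq[n]$ with $\sigma\notin C$.

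The heart of the proof is then to produce a \emph{minimal} generator of $Z$ of degree $\ge m+2$, contradicting $Z=Z^{\lin}$. I would work inside the full Eagon--Northcott complex $\MC(\varphi)$, which is exact and whose degree-$(m+1)$ syzygies $r_s(\pi)$ range over \emph{all} $(m+1)$-subsets $\pi$, and use that $Z$ is the intersection of the ambient syzygy module with $\MC_0(\Delta(C);\varphi)$. The idea is to combine two ambient linear relations so as to cancel the forbidden basis element $\bb(\sigma;\mathbf 0)$. Concretely, for an auxiliary vertex $b\notin\sigma$ and $\pi=\sigma\cup\{b\}$, the element $x_{s'b}\,r_s(\pi)-x_{sb}\,r_{s'}(\pi)$ (with $s\neq s'$) has no $\bb(\sigma;\mathbf 0)$-component and equals, up to sign, $\sum_{k}\pm(x_{sj_k}x_{s'b}-x_{s'j_k}x_{sb})\,\bb\big((\sigma\setminus\{j_k\})\cup\{b\};\mathbf 0\big)$, a degree-$(m+2)$ element. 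Its minimality is then a finite check in the single multidegree $(\mathbf 1+\mathbf e_s+\mathbf e_{s'},\gamma_\sigma+2\varepsilon_b)$: since $\sigma\cup\{b\}$ is not a clique, no $(m+1)$-clique of $C$ sits inside it, so $Z^{\lin}$ has no generator in the relevant multidegree, and the only ambient linear relations supported there, the multiples of $r_t(\sigma\cup\{b\})$, all involve $\bb(\sigma;\mathbf 0)$ and hence lie outside $Z$. Thus the element cannot lie in $\mm Z$ and is a genuine minimal syzygy of degree $m+2$.

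The main obstacle is the combinatorial one hidden in this construction: for the cancelled combination to lie in $\MC_0(\Delta(C);\varphi)$ at all, one needs the surviving column-sets $(\sigma\setminus\{j_k\})\cup\{b\}$ to be circuits of $C$ for all $k$, i.e.\ a vertex $b$ whose addition makes $\sigma\cup\{b\}$ into a near-clique missing only $\sigma$. The work therefore lies in choosing $\sigma$ and $b$ (using that $\Delta(C)$ is a genuine clique complex with all minimal nonfaces of size $m$) so that such a $b$ exists; and in the remaining cases, where circuits sit in separated clusters, I would fall back to the Koszul syzygy $\mathbf{m}_{\tau'}\bb(\tau;\mathbf 0)-\mathbf{m}_{\tau}\bb(\tau';\mathbf 0)$ of a suitable pair $\tau,\tau'\in C$ and again use a multidegree argument to rule out its expression through the few linear syzygies lying below its multidegree. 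This last case is exactly what happens for binomial edge ideals ($m=2$), where a non-adjacent configuration carries no triangle above it, and it is the template I would try to make uniform.
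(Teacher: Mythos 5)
Your framework is sound and your two local computations are both correct: the reduction of linear presentation to $Z=Z^{\lin}$ via Theorem~\ref{when} and Theorem~\ref{hope}, the remark that all minimal nonfaces of $\Delta(C)$ have cardinality $m$, the cancellation element $x_{s'b}\,r_s(\pi)-x_{sb}\,r_{s'}(\pi)$ with its single-multidegree minimality check (this is essentially the contrapositive of the first step, labelled ($\ast$), of the paper's Lemma~\ref{linear lemma}, which shows by the same multidegree count that under linear presentation any $(m+1)$-set containing two circuits must be a clique), and the fallback via the Koszul syzygy $z'=\mathbf{m}_{\tau_2}\bb(\tau_1;\mathbf{0})-\mathbf{m}_{\tau_1}\bb(\tau_2;\mathbf{0})$, which is exactly how the paper concludes. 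But there is a genuine gap precisely where you wrote ``the work therefore lies in choosing $\sigma$ and $b$'': you never prove that a non-complete clutter admits either of your two configurations, and for the Koszul fallback the choice of the pair is the whole difficulty. The obstruction argument for $z'$ works only if $\tau_1\cup\{c\}\notin\Delta(C)$ for \emph{every} $c\in\tau_2\setminus\tau_1$, since otherwise the $\bb(\tau_1;\mathbf{0})$-component of $z'$ can be produced by linear syzygies; indeed, when $\tau_1\cup\tau_2$ is a clique, $z'$ genuinely lies in the image of $\partial$ by exactness of the Eagon--Northcott complex on that clique. An arbitrary pair of circuits with non-clique union need not have this property, so ``a suitable pair'' is not a routine choice.

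The paper fills this hole with a two-stage construction that your sketch has no analogue of, and, importantly, the combinatorial input is itself extracted from the algebraic hypothesis rather than proved separately. Assuming linear presentation, it first establishes Lemma~\ref{linear lemma}: (a) two cliques meeting in at least $m-1$ vertices have a clique union (by induction from the ($\ast$) step), and (b) a transfer property: if $\sigma,\tau$ are circuits inside a clique $\rho$ and $c\notin\rho$, then $\tau\cup\{c\}\in\Delta(C)$ if and only if $\sigma\cup\{c\}\in\Delta(C)$. Then, given circuits $\sigma_1,\sigma_2$ with $\sigma_1\cup\sigma_2\notin\Delta(C)$ (produced from a non-circuit $m$-set by a maximality argument), it uses linear presentation once more to take a multihomogeneous preimage $w$ of the Koszul syzygy of $\sigma_1,\sigma_2$ under $\partial$, whose existence forces the sets $A=\{a\in\sigma_1\setminus\sigma_2:\sigma_2\cup\{a\}\in\Delta(C)\}$ and the symmetric $B$ to be nonempty; it enlarges to cliques $\rho_1=\sigma_1\cup B$, $\rho_2=\sigma_2\cup A$ via part (a), deduces $|\rho_1\cap\rho_2|<m-1$, and chooses circuits $\tau_i$ with $\rho_1\cap\rho_2\subseteq\tau_i\subseteq\rho_i$; part (b) is then exactly what verifies $\tau_1\cup\{c\}\notin\Delta(C)$ for all $c\in\tau_2\setminus\tau_1$. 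Until you supply an argument of this kind, in particular some substitute for the transfer lemma, your ``template to make uniform'' is the unproven core of the implication from (b) to (c), not a finishing detail.
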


To prove the above theorem, we need the following lemma.

\begin{Lemma}\label{linear lemma}
Let $C$ be an $m$-uniform clutter on $[n]$ and suppose that $J_C$ is linearly presented. Then the following conditions hold:
\begin{enumerate}
\item[{\em (a)}] for each $\sigma_1, \sigma_2\in \Delta(C)$ with $|\sigma_1\cap \sigma_2|\geq m-1$, we have $\sigma_1\cup \sigma_2\in \Delta(C)$.
\item[{\em (b)}] for circuits $\tau$ and $\sigma$ of $C$, $\rho\in \Delta(C)$ with $\sigma,\tau\subseteq \rho$, and  $c\notin \rho$, it follows that $\tau\cup \{c\}\in \Delta(C)$ if and only if $\sigma\cup \{c\}\in \Delta(C)$.
\end{enumerate}
\end{Lemma}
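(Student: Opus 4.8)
The plan is to convert the hypothesis ``$J_C$ is linearly presented'' into a statement about the first syzygy module and then read off the combinatorics in the fine $\ZZ^n$-grading (with $\deg x_{ij}=\varepsilon_j$). If $J_C$ is linearly presented then $F_1=F_1^{\lin}$, so by Theorem~\ref{hope} and Corollary~\ref{who} the syzygy module $Z=\ker(\psi\colon\MC_0(\Delta(C);\varphi)\to J_C)$ is generated by the Laplace relations $r_s(\tau)=\sum_k(-1)^{k+1}x_{sj_k}\bb(\tau\setminus\{j_k\};{\bf 0})$ over $(m+1)$-subsets $\tau\in\Delta(C)$ and rows $s$. Each $r_s(\tau)$ has $\ZZ^n$-degree $\varepsilon_\tau$ and each minor ${\bf m}_\mu$ has degree $\varepsilon_\mu$, so any homogeneous syzygy of degree $\gamma$ is an $S$-combination of the $r_s(\tau)$ in which every contributing $\tau$ satisfies $\tau\subseteq\supp(\gamma)$. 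The lever I will use repeatedly is the consequence: \emph{if $\supp(\gamma)=T$ with $|T|=m+1$ and $T\notin\Delta(C)$, then $Z_\gamma=0$}, because the only $(m+1)$-subset of $T$ is $T$ itself, which is not a face.

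The heart of the matter is the base case of (a): two circuits $\sigma_1=\delta\cup\{a\}$, $\sigma_2=\delta\cup\{b\}$ with $|\delta|=m-1$ force $T=\delta\cup\{a,b\}$ to be a clique. I would argue by contradiction. Put $W=\{w\in T: T\setminus\{w\}\notin C\}$; since $T\setminus\{a\}=\sigma_2$ and $T\setminus\{b\}=\sigma_1$ are circuits, $\emptyset\neq W\subseteq\delta$, so $|W|\le m-1<m$. The $m\times|W|$ matrix of indeterminates $(x_{sw})_{s\in[m],\,w\in W}$ then has rank $|W|<m$, so there is a nonzero homogeneous vector $(\lambda_1,\dots,\lambda_m)\in S^m$, with entries in the variables $\{x_{sw}:w\in W\}$, satisfying $\sum_s\lambda_s x_{sw}=0$ for every $w\in W$. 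Using the Laplace identities $\sum_{v\in T}(-1)^{p(v)}x_{sv}{\bf m}_{T\setminus\{v\}}=0$, valid in $S$, I form $z=\sum_{v\in P}(-1)^{p(v)}\bigl(\sum_s\lambda_s x_{sv}\bigr)\bb(T\setminus\{v\};{\bf 0})$, where $P=T\setminus W$ indexes the circuit facets. The choice of $(\lambda_s)$ makes the $W$-terms cancel, so $\psi(z)=0$, i.e.\ $z\in Z$; and because the full matrix $X_T$ has rank $m$, the coefficients $\sum_s\lambda_s x_{sv}$ ($v\in P$) cannot all vanish, so $z\neq0$. Since $z$ is homogeneous of a degree $\gamma$ with $\supp(\gamma)=T$ while $T\notin\Delta(C)$, this contradicts the lever. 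Hence $W=\emptyset$ and $T$ is a clique. I expect this step --- manufacturing a genuine, nonzero, multihomogeneous syzygy supported only on the circuit minors by cancelling the missing facets --- to be the principal obstacle.

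Granting the base case, part (a) in general should follow by an exchange induction. First I would reduce the case $|\sigma_1\setminus\sigma_2|=|\sigma_2\setminus\sigma_1|=1$ (with $|\delta|\ge m-1$) to the base case: for an $m$-subset $\mu=\delta'\cup\{a,b\}$ of $\sigma_1\cup\sigma_2$ choose $x_0\in\delta\setminus\delta'$ and apply the base case to the circuits $\delta'\cup\{a,x_0\}\subseteq\sigma_1$ and $\delta'\cup\{b,x_0\}\subseteq\sigma_2$, whose union is then a clique containing $\mu$. Then I would induct on $|\sigma_1\setminus\sigma_2|+|\sigma_2\setminus\sigma_1|$: assuming, say, $|\sigma_1\setminus\sigma_2|\ge2$, remove two of its elements in turn to obtain cliques $\rho_0,\rho_1$ (via the induction hypothesis applied to each $\sigma_1\setminus\{a_i\}$ together with $\sigma_2$); these satisfy $|\rho_0\cap\rho_1|\ge m-1$ and $\rho_0\cup\rho_1=\sigma_1\cup\sigma_2$, and have smaller symmetric difference, so a further application of the hypothesis finishes.

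Finally, part (b) should follow formally from part (a). Since $\sigma,\tau\subseteq\rho\in\Delta(C)$ and all $m$-subsets of the clique $\rho$ are circuits, I would connect $\sigma$ to $\tau$ by a chain of circuits inside $\rho$ whose consecutive members share $m-1$ vertices, reducing to adjacent circuits $\sigma=\eta\cup\{p\}$, $\sigma'=\eta\cup\{q\}$. Assuming $\eta\cup\{p,c\}\in\Delta(C)$, I apply part (a) to the cliques $\eta\cup\{p,c\}$ and $\eta\cup\{p,q\}$ (the latter being a face of $\rho$), which meet in $\eta\cup\{p\}$ of size $m\ge m-1$; their union $\eta\cup\{p,q,c\}$ is a clique, whence $\sigma'\cup\{c\}=\eta\cup\{q,c\}\in\Delta(C)$. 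The reverse implication is symmetric, completing the proof.
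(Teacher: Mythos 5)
Your proposal is correct, and its overall architecture is the paper's: use Theorem~\ref{hope} to identify the linear first syzygies of $J_C$ with the Laplace relations $r_s(\tau)$ indexed by $(m+1)$-cliques $\tau$, exploit the fine $\ZZ^n$-grading to conclude that when $J_C$ is linearly presented no syzygy can live in a degree whose support is an $(m+1)$-set $T\notin\Delta(C)$, exhibit such a syzygy to force the base case, and then run exchange inductions (your induction on the symmetric difference via $\rho_0,\rho_1$, and your chain of adjacent circuits inside $\rho$ for (b), are harmless reorganizations of the paper's induction on $p+q$ for (a) and on $|\sigma\setminus\tau|$ for (b)). The genuinely different step is your witness syzygy. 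The paper takes the obvious Koszul relation $z={\bf m}_{\tau_1}\bb(\tau_2;{\bf 0})-{\bf m}_{\tau_2}\bb(\tau_1;{\bf 0})$, lifts it to $F_1=\MC_1(\Delta(C);\varphi)\oplus F_1'$, and reads off from the multidegree that the $\MC_1$-component would have to be a multiple of $\bb(\sigma;e_t)$ with $\sigma\notin\Delta(C)$, hence vanishes, so $F_1'\neq 0$ --- contradiction. You instead manufacture a syzygy of degree $\varepsilon_T+\varepsilon_W$ supported only on the circuit facets of $T$, via a left-kernel vector $\lambda$ of the generic $m\times|W|$ matrix on the missing columns $W$; this checks out in every detail (such a homogeneous $\lambda$ exists, e.g.\ as signed maximal minors of $|W|+1$ of its rows; the $W$-terms cancel by the Laplace identity; $z\neq 0$ because $\rank X_T=m$; and $\supp(\varepsilon_T+\varepsilon_W)=T$), but it is heavier than necessary: your own ``lever'' applies verbatim to the Koszul relation ${\bf m}_{\sigma_2}\bb(\sigma_1;{\bf 0})-{\bf m}_{\sigma_1}\bb(\sigma_2;{\bf 0})$, which is visibly nonzero, lies in $Z=\ker\psi$, and has degree $\varepsilon_{\sigma_1}+\varepsilon_{\sigma_2}$ with support exactly $T$, giving the contradiction in one line. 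What your construction buys, modestly, is a nonlinear syzygy in the lower total degree $m+1+|W|$ rather than $2m$, i.e.\ sharper information on where the failure of linear presentation first shows up --- information the lemma does not need.
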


\begin{proof}
(a) Before proving (a) we first show:

($\ast$) for each $(m+1)$-subset $\sigma$ of $[n]$ which contains at least two circuits of $C$, we have $\sigma\in \Delta(C)$.

Suppose on the contrary this is not the case, and suppose $\sigma=\{a_1,\ldots,a_{m+1}\}$ is an $(m+1)$-subset of $[n]$ which contains at least two circuits of $C$, but $\sigma\notin \Delta(C)$. We may assume that $\tau_1=\{a_1,\ldots,a_m\}$ and $\tau_2=\{a_2,\ldots,a_{m+1}\}$ are circuits of $C$. Let $(\FF,\delta)$ be the minimal graded free resolution of $J_C$. Note that $F_0=\MC_0(\Delta(C);\varphi)$ and $F_1^{\mathrm{lin}}=\MC_1(\Delta(C);\varphi)$ by Theorem~\ref{hope}. So, $F_1=\MC_1(\Delta(C);\varphi)\dirsum F'_1$, for some free graded $S$-module $F'_1$. Let $z={\bf{m}}_{\tau_1}{\bf{b}}(\tau_2;{\bf{0}})-{\bf{m}}_{\tau_2}{\bf{b}}(\tau_1;{\bf{0}})\in \MC_0(\Delta(C);\varphi)$. Since $z\in \ker \psi$, there exists a nonzero homogeneous element $f=(f_1,f_2)\in F_1$ where $f_1\in \MC_1(\Delta(C);\varphi)$, $f_2\in F'_1$ and $\delta (f)=z$. On the other hand, $\deg(z)=\varepsilon_{a_1}+2\varepsilon_{a_2}+\cdots+2\varepsilon_{a_m}+\varepsilon_{a_{m+1}}$, so that $\deg(f_1)=\deg(f_2)=\varepsilon_{a_1}+2\varepsilon_{a_2}+\cdots+2\varepsilon_{a_m}+\varepsilon_{a_{m+1}}$, because $\delta$ is homogeneous. Any basis element of $\MC_1(\Delta(C);\varphi)$ appearing in the presentation of $f_1$ is of the form ${\bf{b}}(\sigma;e_t)$ for some $t=1,\ldots,m$. Since by the assumption $\sigma$ is not a clique in $C$, no such basis element exists in $\MC_1(\Delta(C);\varphi)$, and hence $f_1=0$. Therefore, we have $f_2\neq 0$, since $f\neq 0$. This contradicts to our assumption that $J_{\MC}$ is linearly presented.

Now we show that ($\ast$) implies (a). Suppose that condition ($\ast$) holds. Let $\sigma_1=\{v_1,\ldots,v_k,u_1,\ldots,u_r\}$ and $\sigma_2=\{v_1,\ldots,v_k,w_1,\ldots,w_s\}$ be two cliques of $C$ with $|\sigma_1\cap \sigma_2|=k\geq m-1$. We show that $\sigma_1\cup \sigma_2$ is a clique. For this, it is enough to show that each $m$-subset $H=\{v_{a_1},\ldots,v_{a_{t}},u_{b_1},\ldots,u_{b_{p}},w_{c_1},\ldots,w_{c_{q}}\}$ of $\sigma_1\cup \sigma_2$ in which $t+p+q=m$ and $p,q\geq 1$, is a circuit of $C$. We prove this by induction on $p+q$. First assume that $p=q=1$. Then we have $\tau_1=\{v_{a_1},\ldots,v_{a_{m-1}},u_{b_1}\}\in C$ and $\tau_2=\{v_{a_1},\ldots,v_{a_{m-1}},w_{c_1}\}\in C$,
since $k\geq m-1$ and $\sigma_1,\sigma_2\in \Delta(C)$. Thus, $\{v_{a_1},\ldots,v_{a_{m-1}},u_{b_1},w_{c_1}\}$ contains at least two circuits $\tau_1$ and $\tau_2$ of $C$, and hence by ($\ast$) it is a clique in $C$ on $m+1$ vertices. Therefore, all its $m$-subsets are circuits of $C$, and in particular, $\{v_{a_1},\ldots,v_{a_{m-2}},u_{b_1},w_{c_1}\}\in C$, so that we are done in this case.
Now suppose that $p+q=\ell>2$. Then by induction hypothesis, we have $(H\cup \{v_{a_{t+1}}\})\setminus \{u_{b_p}\}\in C$ and $(H\cup \{v_{a_{t+1}}\})\setminus \{w_{c_q}\}\in C$ for all $a_{t+1}\in [k]\setminus \{a_1,\ldots,a_t\}$. Therefore, by ($\ast$) $H\cup \{v_{a_{t+1}}\}\in \Delta(C)$ which implies that $H$ is a circuit of $C$, as desired.

(b) It is enough to show that $\sigma\cup \{c\}\in \Delta(C)$ if $\tau\cup \{c\}\in \Delta(C)$. If $\tau=\sigma$, there is nothing to prove. Suppose $\tau\cup \{c\}\in \Delta(C)$, $a\in \sigma\setminus \tau$ and $b\in \tau\setminus \sigma$. Then $(\tau\setminus \{b\})\cup \{c\}$ is a circuit of $C$. Since $\rho\in \Delta(C)$, we have $\tau'=(\tau\setminus \{b\})\cup \{a\}$ is also a circuit of $C$. Then it follows by (a) that $\tau'\cup \{c\}\in \Delta(C)$. Since $|\sigma\setminus \tau'|<|\sigma\setminus \tau|$, induction on $|\sigma\setminus \tau|$ yields the result.
\end{proof}

Now we are ready to prove Theorem~\ref{linear res}.

\begin{proof} [Proof of Theorem~\ref{linear res}]
(a)\implies (b) is clear, and (c)\implies (a) follows since the Eagon-Northcott complex is the minimal graded free resolution of the determinantal facet ideal of a complete clutter.

(b)\implies (c): We may assume that $C$ is a clutter on the vertex set $[n]$. Assume that $J_C$ is linearly presented, and let $\FF$ be the minimal graded free resolution of $J_C$. Then by Theorem~\ref{hope} we have $F_1=\MC_1(\Delta(C);\varphi)$.
Now suppose on the contrary that $C$ is not complete. We claim that there exist two circuits in $C$ whose union is not a clique in $C$. Now we prove the claim. Since $C$ is not complete, there is an $m$-subset $A=\{c_1,\ldots,c_m\}$ of $[n]$ which is not a circuit of $C$. Let $k$ be the biggest integer such that $\{c_1,\ldots,c_k\}$ is contained in a circuit $\sigma_1$. Then $k<m$, because $A$ is not a circuit. On the other hand, $c_{k+1}\in \sigma_2$ for some $\sigma_2\in C$, but $c_{k+1}\notin \sigma_1$ and therefore $\sigma_2\neq \sigma_1$. Moreover, $\sigma_1\cup \sigma_2$ is not a clique in $C$, since otherwise the set $\{c_1,\ldots,c_k,c_{k+1}\}$ is contained in a circuit of $C$ which is a contradiction, because $k$ is the biggest integer with this property. This proves the claim.

Let $z={\bf{m}}_{\sigma_2}{\bf{b}}(\sigma_1;{\bf{0}})-{\bf{m}}_{\sigma_1}{\bf{b}}(\sigma_2;{\bf{0}})$. Since $z\in \ker \psi$, there exists a nonzero multihomogeneous element $w\in \MC_1(\Delta(C);\varphi)$ of the same multidegree as $z$ with $\partial(w)=z$. Let $A=\{a\in \sigma_1\setminus \sigma_2 : \sigma_2\cup \{a\}\in \Delta(C)\}$ and $B=\{b\in \sigma_2\setminus \sigma_1 : \sigma_1\cup \{b\}\in \Delta(C)\}$. Since $\partial(w)=z$, it follows that $A,B\neq \emptyset$. Now let $\rho_1=\sigma_1\cup B$ and $\rho_2=\sigma_2\cup A$. Using repeatedly part~(a) of Lemma~\ref{linear lemma}, it follows that $\rho_1,\rho_2\in \Delta(C)$. Since $\rho_1\cup \rho_2=\sigma_1\cup \sigma_2$, we deduce that $\rho_1\cup \rho_2\notin \Delta(C)$. Then by Lemma~\ref{linear lemma} part~(a), we have $|\rho_1\cap \rho_2|<m-1$. So, for $i=1,2$ we choose $\tau_i$ such that $\rho_1\cap \rho_2\subseteq \tau_i\subseteq \rho_i$ and $|\tau_i|=m$. In particular, $\tau_1\neq \tau_2$. Now consider $z'={\bf{m}}_{\tau_2}\bb(\tau_1;{\bf{0}})-{\bf{m}}_{\tau_1}\bb(\tau_2;{\bf{0}})$ which is a nonzero element of $\ker \psi$.

We claim that $\tau_1\cup \{c\}\notin \Delta(C)$ for any $c\in \tau_2\setminus \tau_1$. Then it follows that there is no nonzero multihomogeneous element in $\MC_1(\Delta(C);\varphi)$ of the same multidegree as $z'$, so that  $z'\notin \partial (\MC_1(\Delta(C);\varphi))$ which is a contradiction, and hence $C$ is a complete clutter and the desired result follows.

Now, we prove the claim. Suppose there exists $c\in \tau_2\setminus \tau_1$ such that $\tau_1\cup \{c\}\in \Delta(C)$. We have $c\notin \rho_1$, since $c\notin \tau_1$. Then by Lemma~\ref{linear lemma} part~(b), it follows that $\sigma_1\cup \{c\}\in \Delta(C)$, since $\sigma_1\subseteq  \rho_1$. This is a contradiction, since $c\in \tau_2\subseteq \rho_2$ which implies that $c\in \sigma_2$ because $c\notin A\subseteq \sigma_1$, and so $c\in B\subseteq \rho_1$.
\end{proof}

The following is an straightforward consequence of Theorem~\ref{linear res}.

\begin{Corollary}
Let $C$ be an $m$-clutter and consider the augmentation map
\[
\MC_0(\Delta(C);\varphi)\stackrel{\psi}\longrightarrow J_C \longrightarrow 0.
\]
Then the complex $\MC(\Delta(C);\varphi)\stackrel{\psi}\longrightarrow J_C \longrightarrow 0$ is exact if and only if $C$ is a complete clutter.
\end{Corollary}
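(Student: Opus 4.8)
The plan is to deduce this directly from Theorem~\ref{hope} and Theorem~\ref{linear res}, the content being that exactness of $\MC(\Delta(C);\varphi)\to J_C\to 0$ is just a reformulation of the property that $J_C$ has a linear resolution. First I would invoke Theorem~\ref{hope} to identify $\MC(\Delta(C);\varphi)$ with the linear strand $\FF^{\lin}$ of the minimal graded free resolution $\FF$ of $J_C$. Since $J_C$ is generated in the single degree $m$, one has $F_0=F_0^{\lin}$, so the augmentation $\psi$ coincides with the map $F_0\to J_C$ appearing in $\FF$; consequently the augmented complex $\MC(\Delta(C);\varphi)\to J_C\to 0$ is exact precisely when $\FF^{\lin}$ is a free resolution of $J_C$.

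The key step is then to show that $\FF^{\lin}$ resolves $J_C$ if and only if $\FF=\FF^{\lin}$. The nontrivial implication uses minimality: $\FF^{\lin}$ is a subcomplex of the \emph{minimal} resolution $\FF$, so the entries of its differentials lie in the graded maximal ideal $\mm$. Hence, if $\FF^{\lin}$ happens to resolve $J_C$, it is automatically a minimal free resolution, and by the uniqueness of minimal free resolutions it must coincide with $\FF$. The converse is immediate, since if $\FF=\FF^{\lin}$ then $\MC(\Delta(C);\varphi)\iso\FF$ is by construction a resolution of $J_C$. But $\FF=\FF^{\lin}$ is exactly the assertion that $J_C$ has a linear resolution.

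Finally I would apply Theorem~\ref{linear res}, according to which $J_C$ has a linear resolution if and only if $C$ is a complete clutter; combining the equivalences above yields the claim. I do not expect any serious obstacle here: the only point requiring a moment of care is the minimality observation, namely that a subcomplex of a minimal free resolution which itself resolves $J_C$ is necessarily minimal and therefore equal to $\FF$. Everything else is a direct appeal to the two theorems already established.
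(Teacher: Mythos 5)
Your proof is correct and takes exactly the route the paper intends: the corollary is stated there as a straightforward consequence of Theorem~\ref{linear res}, via the identification $\MC(\Delta(C);\varphi)\iso \FF^{\lin}$ from Theorem~\ref{hope}. Your minimality observation --- that a subcomplex of the minimal resolution $\FF$ which itself resolves $J_C$ is automatically a minimal free resolution and hence coincides with $\FF$, so that exactness of the augmented complex is equivalent to $J_C$ having a linear resolution --- correctly supplies the one detail the paper leaves implicit.
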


\newpage

 \end{document}